\documentclass[12pt]{amsart}
\makeatletter
\@namedef{subjclassname@2020}{%
  \textup{2020} Mathematics Subject Classification}
\makeatother

\usepackage{mathpazo}
\usepackage{avant}
\usepackage{amssymb}
\usepackage{mathtools}
\usepackage{xurl}
\usepackage[backend=biber,style=alphabetic]{biblatex}

\newtheorem{thm}{Theorem}[section]
\newtheorem{cor}[thm]{Corollary}
\newtheorem{lem}[thm]{Lemma}
\newtheorem{prop}[thm]{Proposition}
\theoremstyle{definition}
\newtheorem{defn}[thm]{Definition}
\newtheorem{example}[thm]{Example}
\theoremstyle{remark}
\newtheorem{rem}[thm]{Remark}
\numberwithin{equation}{section}

\newcommand{\CA}{\mathcal A}
\newcommand{\CO}{\mathcal O}
\newcommand{\C}{\mathbb C}
\newcommand{\R}{\mathbb R}
\newcommand{\cinf}{\mathcal C^\infty}
\newcommand{\delb}{\overline{\partial}}
\newcommand{\MC}{\operatorname{MC}}
\newcommand{\End}{\operatorname{End}}

\newcommand{\Span}{\operatorname{span}}
\newcommand{\rhom}{\operatorname{\mathbf{R}Hom}}
\newcommand{\ext}{\operatorname{Ext}}

\begin{document}

\title[Superconnections on transversely holomorphic foliations]
{Superconnections, descent, and monodromy on transversely holomorphic
foliations}

\author{Qingyun Zeng}
\address{Department of Mathematics, University of Pennsylvania,
Philadelphia, PA 19104}
\email{qze@math.upenn.edu}

\subjclass[2020]{Primary 18G80; Secondary 32L10, 53C12, 58H05}
\keywords{transversely holomorphic foliation, elliptic involutive structure,
superconnection, derived category, mixed de Rham--Dolbeault complex, descent,
monodromy groupoid, homotopy fixed point}

\begin{abstract}
Let \(X\) carry a transversely holomorphic foliation, equivalently an
elliptic involutive structure \(V\subset T_{\C}X\), and let \(\CO_V\) be its
sheaf of leafwise-constant, transversely holomorphic functions.  We construct
a finite superconnection model for the derived category of coherent
\(\CO_V\)-modules.  The key input is a mixed local reduction for finite
Maurer--Cartan objects over the mixed de Rham--Dolbeault dga
\((\wedge^\bullet V^\vee,d_V)\): a multiplicative homotopy contracts the
real directions, after which Block's Dolbeault gauge theorem removes the
positive transverse form degrees.  For compact \(X\), this gives an exact
equivalence between the homotopy category of bounded finite-rank flat
\(V\)-superconnections and \(D^b_{\mathrm{coh}}(X,\CO_V)\), interpolating
between the de Rham and Dolbeault realizations.  We prove locally finite
\v{C}ech descent under necessary uniform amplitude and rank bounds, identify
the coherent heart with equivariant coherent analytic sheaves on a
transverse monodromy groupoid, and characterize descent to ordinary
holonomy.  For a holomorphic suspension, we identify the full
superconnection category, up to Morita equivalence, with the homotopy fixed
points of the Dolbeault category of the transversal and derive an
equivariant Ext spectral sequence.  Examples on \(S^1\) and \(S^2\) delimit
when ordinary monodromy \(1\)-groupoids can recover the derived category.
\end{abstract}

\maketitle

\section{Introduction}

A transversely holomorphic foliation interpolates between a smooth manifold
and a complex manifold: its local geometry is a product of real plaques and
complex transversals.  Its natural structure sheaf \(\CO_V\) consists of
functions constant on the plaques and holomorphic in the transverse
directions.  We construct a finite differential-geometric model for
\(D^b_{\mathrm{coh}}(X,\CO_V)\) which simultaneously recovers the de Rham
model for finite homotopy local systems and the Dolbeault model for coherent
analytic complexes.

The objects in the model are bounded graded finite-rank bundles equipped
with flat superconnections along the elliptic involutive structure \(V\).
They may be viewed as representations up to homotopy of the complex Lie
algebroid \(V\), in the sense of the general theory developed in
\cite{AriasAbadCrainic2012}; in Block's terminology they are cohesive
modules.  For a compact complex manifold, Block identified their Dolbeault
homotopy category with the bounded derived category of coherent analytic
sheaves \cite[Theorem~4.3]{Block2010}.  The central question is whether this
equivalence survives for the mixed de Rham--Dolbeault dga of a transversely
holomorphic foliation.

Korman proved the local normal form for an EIS, its Poincar\'e lemma, and the
equivalence between ordinary flat \(V\)-bundles and locally free
\(\CO_V\)-modules \cite[Theorems~5--7]{Korman2014}.  Those results do not by
themselves treat the higher components of a cohesive superconnection.
Chuang--Holstein--Lazarev give a general local-to-global comparison for a fine
sheaf of dgas \cite[Proposition~7.4, Theorem~7.10 and
Corollary~7.13]{CHL2021}.  The nonformal point needed here is their local
Maurer--Cartan condition.

The first main result, Theorem~\ref{thm:local-mc}, supplies this condition.
On an adapted chart \(B^d\times\Delta^n\), every finite Maurer--Cartan object
over
\[
\Omega^\bullet(B^d)\widehat\otimes
\CA^{0,\bullet}(\Delta^n)
\]
becomes, after shrinking, homotopy gauge equivalent to a finite complex over
\(\CO(\Delta^n)\).  A multiplicative contraction removes the real
directions, and Block's same-bundle Dolbeault gauge construction removes the
remaining positive form degrees.  This is stronger than the ordinary
Poincar\'e lemma, which does not control the higher superconnection
components.

After verifying the remaining sheaf-dga hypotheses and proving that the
finite form of the Chuang--Holstein--Lazarev condition is sufficient, we
obtain
\[
H^0\!\left(\mathcal P_{\CA^\bullet(X)}\right)
\simeq
D_{\mathrm{perf}}^B(X,\CO_V)
\]
for every \(X\).  If \(X\) is compact, this becomes the realization theorem
\[
H^0\!\left(\mathcal P_{\CA^\bullet(X)}\right)
\simeq
D^b_{\mathrm{coh}}(X,\CO_V).
\]
Compactness is used only in the second identification.  An earlier version
of the compact statement was announced in the author's thesis
\cite[Theorem~IV and Proposition~22.19]{ZengThesis}; the present argument
replaces its unproved flatness step by the mixed local reduction and an
independent verification of the comparison hypotheses.

The superconnection model also satisfies locally finite descent.  Adapting
Wei's partition-of-unity and quasi-representability argument
\cite{Wei2023}, we prove a dg quasi-equivalence with globally bounded twisted
objects.  For an infinite cover the common amplitude and rank bounds are
essential; Example~\ref{ex:unbounded-descent} gives an explicit obstruction
to unrestricted descent.  For a finite cover, and hence for the cover used
in the suspension calculation below, the bounded and unrestricted
categories coincide.

At the abelian level, restriction to a complete transversal identifies
\(\operatorname{Coh}(X,\CO_V)\) with equivariant coherent analytic sheaves
on the transverse monodromy groupoid.  This description is invariant under
changing the complete transversal and hence defines a coherent category on
a ringed analytic monodromy stack.  Passage to ordinary holonomy retains
exactly the objects on which the monodromy-to-holonomy kernel acts strictly
trivially.  The \(S^1\) and \(S^2\) examples show why this heart-level
statement cannot be promoted to a general derived equivalence with an
ordinary \(1\)-groupoid.

The strongest positive derived result is obtained for a holomorphic
suspension \(X_\phi\) of a biholomorphism \(\phi:Z\to Z\).  If
\(\mathcal C_Z=\mathcal P_{\CA^{0,\bullet}(Z)}\), then
\[
\mathcal P_{\CA^\bullet(X_\phi)}
\simeq
\mathcal C_Z^{h\phi}
\]
in \(\operatorname{Cat}^{\mathrm{perf}}_{\infty,\C}\), equivalently up to
Morita equivalence.  For equivariant coherent sheaves this gives
\[
H^p\!\left(\mathbb Z,
\operatorname{Ext}_Z^q(\mathcal F,\mathcal G)\right)
\Longrightarrow
\operatorname{Ext}_{\CO_V}^{p+q}
(\widetilde{\mathcal F},\widetilde{\mathcal G}).
\]
The elliptic-curve suspension by \([-1]\) exhibits a non-product case in
which monodromy removes transverse Ext classes.

Thus the general comparison and twisted-descent formalisms are imported
from Chuang--Holstein--Lazarev and Wei.  The new content is the finite mixed
local reduction, the exact verification needed to apply those formalisms to
an EIS, the sharp bounded descent statement, the analytic monodromy
interpretation of the coherent heart, and the derived suspension and Ext
calculation.

A companion work develops a higher Riemann--Hilbert correspondence and
descent theorem for regular smooth foliations
\cite{ZengHigherRH}.  Its source is the leafwise de Rham dga and its target
consists of smooth infinity-local systems on the foliated monodromy
infinity-groupoid.  It does not include the transverse Dolbeault
coefficients needed for the mixed EIS comparison studied here, and no result
from it is used in the proofs below.

A separate in-preparation paper applies related finite-superconnection
methods to a categorical Penrose--Ward correspondence for self-dual
Yang--Mills theory \cite{ZengYangMills}; it neither supplies nor is used in
the present results.

Further companion manuscripts treat singular-foliation directions:
characteristic classes of \(L_\infty\)-resolved singular foliations
\cite{BlockZengCharacteristicClasses}, global Dolbeault
\(L_\infty\)-algebroid resolutions of singular holomorphic foliations
\cite{BlockWeiZengDolbeaultResolutions}, and higher monodromy and higher
transverse holonomy models
\cite{ZengHigherMonodromy,ZengHigherHolonomy}.  These works concern singular
foliations rather than the regular EIS considered here, and no statement
from them is used below.

The paper is organized as follows.  We first review elliptic involutive
structures and isolate the finite sheafification problem.  We then prove the
mixed local reduction and the global realization and descent theorems.
After describing the coherent heart by monodromy and holonomy groupoids, we
examine the basic de Rham, Dolbeault, and product cases and finish with the
derived suspension theorem and its equivariant Ext calculation.

\section*{Acknowledgments}

This article substantially refines part of the author's Ph.D.\ thesis
\cite{ZengThesis} and forms part of a broader program applying
higher-categorical methods to foliations and related geometry
\cite{ZengHigherRH,BlockZengCharacteristicClasses,ZengYangMills,
BlockWeiZengDolbeaultResolutions}.

The author is deeply grateful to Jonathan Block for his guidance,
illuminating discussions, encouragement, and support, and thanks Jim
Stasheff, Sylvain Lavau, Zhaoting Wei, and Yingdi Qin for helpful discussions.

\section{Elliptic involutive structures}

Let \(X\) be a smooth manifold without boundary.  We use the convention that
smooth manifolds are Hausdorff, second countable, and finite-dimensional.

\begin{defn}
An \emph{elliptic involutive structure} on \(X\) is a complex subbundle
\[
V\subset T_{\C}X
\]
such that
\[
[\Gamma(V),\Gamma(V)]\subset\Gamma(V),
\qquad
V+\overline V=T_{\C}X.
\]
We write
\[
\CA_X^\bullet=(\wedge^\bullet V^\vee,d_V)
\]
for its Chevalley--Eilenberg dga and
\[
\CO_V=\ker(d_V:\CA_X^0\longrightarrow\CA_X^1)
\]
for its structure sheaf.
\end{defn}

The symbol sequence of \(d_V\) is the Koszul complex of the dual anchor.
Thus the second condition in the definition is equivalent to ellipticity of
\((\CA_X^\bullet,d_V)\).  This agrees with Block's definition for a complex
Lie algebroid; compare \cite[Section~5.2]{Block2010}.

\begin{thm}[Local normal form]\label{thm:eis-normal-form}
For every \(x\in X\), there is an adapted chart
\[
U=B^d\times\Delta^n
\]
with real coordinates \(t_1,\ldots,t_d\) and complex coordinates
\(z_1,\ldots,z_n\) in which
\[
V|_U=
\Span_{\C}\left\{
\frac{\partial}{\partial t_1},\ldots,
\frac{\partial}{\partial t_d},
\frac{\partial}{\partial\overline z_1},\ldots,
\frac{\partial}{\partial\overline z_n}
\right\}
=\Span_{\C}\{dz_1,\ldots,dz_n\}^{\perp}.
\]
In particular, \(\dim_{\R}X=d+2n\).
\end{thm}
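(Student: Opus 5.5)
The plan is to reduce the statement to two classical integrability theorems, Frobenius for the real part and Newlander--Nirenberg for the transverse complex part. First, linear algebra at each point. Set \(E=V\cap\overline V\). Since \(V+\overline V=T_{\C}X\), the dimension \(\dim_{\C}(V\cap\overline V)=2\operatorname{rk}V-\dim X\) is constant. Hence \(E\) is a subbundle, and it is the complexification of a real subbundle \(D=E\cap TX\) of some rank \(d\). Because \(V\) and \(\overline V\) are both involutive, so is \(E\), and therefore \(D\) is involutive. Frobenius then yields local coordinates \((t,y)\in B^d\times\R^{2n}\) in which \(D=\Span\{\partial/\partial t_i\}\), with \(\dim X=d+2n\). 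Here \(2n\) is even because \(V/E\oplus\overline V/E\cong T_{\C}X/E\) and \(\overline V/E\) is the conjugate of \(V/E\).

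Second, descend to the leaf space. On the local quotient \(Y=\R^{2n}_y\) we set \(W=V/E\subset T_{\C}X/E\). We must check that \(W\) is the pullback of a well-defined subbundle of \(T_{\C}Y\), i.e.\ that \(V\) is invariant under the flows of the \(\partial_{t_i}\). This follows from involutivity: \([\partial_{t_i},\Gamma(V)]\subset\Gamma(V)\) because \(\partial_{t_i}\in\Gamma(E)\subset\Gamma(V)\). Invariance of \(V\) under these flows means the map \(t\mapsto W_{(t,y)}\), read in the trivialization \(T_{\C}X/E\cong T_{\C}Y\), is constant along the plaques. So \(W\) defines a subbundle \(W_Y\subset T_{\C}Y\) with \(W_Y\oplus\overline{W_Y}=T_{\C}Y\), which is an almost complex structure. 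Its involutivity is inherited from \(V\): take brackets of \(t\)-independent lifts and reduce modulo \(E\).

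Third, apply Newlander--Nirenberg to \((Y,W_Y)\). This gives holomorphic coordinates \(z_1,\dots,z_n\) on a polydisc \(\Delta^n\) with \(W_Y=\Span\{\partial/\partial\overline z_j\}\). Pulling back, in the coordinates \((t,z)\) on \(B^d\times\Delta^n\), \(V\) contains \(\partial_{t_i}\). Modulo \(E\), \(V\) is spanned by lifts of \(\partial_{\overline z_j}\), and these lifts may be taken to be the coordinate fields \(\partial/\partial\overline z_j\) themselves, since they differ from any other lift by an element of \(E\). A dimension count (\(d+n\)) then gives equality of spans. The annihilator description is immediate, because \(dz_k\) kills exactly \(\partial_t\) and \(\partial_{\overline z}\).

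The main obstacle is the descent step, namely showing that \(V/E\) is basic, and making it precise on a chart. One must shrink so that the plaques are connected (products), and use that the Lie derivative along \(E\) preserves \(V\) to integrate this invariance along the flows. The remaining steps are citations of Frobenius and Newlander--Nirenberg; the latter requires only smoothness, which holds here. This is essentially the argument of \cite[Theorem~5]{Korman2014}, which one could alternatively just cite.
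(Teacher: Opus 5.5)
Your proof is correct, and it differs from the paper's mainly in that the paper gives no argument of its own. The paper quotes the normal form as the Newlander--Nirenberg theorem for elliptic involutive structures, \cite[Theorem~5]{Korman2014} (see also \cite{BCH2008,Treves1992}). You instead derive that elliptic statement from the classical Frobenius and Newlander--Nirenberg theorems.

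The only non-classical input in your route is that \(V\) is preserved by flows along \(E=V\cap\overline V\), and this is easier than you suggest. In Frobenius coordinates, \(E=\ker d\pi\subset V\) forces \(V_{(t,y)}=\C^d\oplus W_{(t,y)}\) with \(W=d\pi(V)\). The condition \([\partial_{t_i},\Gamma(V)]\subset\Gamma(V)\) then says that \(\partial_{t_i}\) of a local frame of \(W\) stays in \(W\). This is a linear ODE showing that \(W\) is independent of \(t\) on a chart with convex plaques. The identity \(V=d\pi^{-1}(W)\) also makes your lifting and dimension-count steps immediate. One detail: choose the almost complex structure so that \(W_Y\) is its \((0,1)\)-part, or conjugate the coordinates afterwards.

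What your route buys is transparency. The classical \(C^\infty\) Newlander--Nirenberg theorem becomes the only deep input. The constant rank and involutivity of \(D_{\R}\) come from linear algebra, via \(\dim_{\C}(V\cap\overline V)=2\operatorname{rk}V-\dim X\), before any normal form is available; the paper instead reads them off from the normal form afterwards. The citation buys brevity and delegates the chart-shrinking and regularity bookkeeping to the references.
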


\begin{proof}
This is the Newlander--Nirenberg theorem for elliptic involutive structures
\cite[Theorem~5]{Korman2014}; see also
\cite[Chapter~VIII]{BCH2008} and \cite{Treves1992}.
\end{proof}

Let \(D_{\R}\subset TX\) be the real distribution characterized by
\[
D_{\R}\otimes_{\R}\C=V\cap\overline V.
\]
The local normal form shows that \(D_{\R}\) has constant rank \(d\) and is
involutive.  Thus an EIS is equivalently a transversely holomorphic
foliation.

\begin{prop}\label{prop:local-ring}
On an adapted chart \(U=B^d\times\Delta^n\), with projection
\(\pi:U\to\Delta^n\),
\[
\CO_V|_U\simeq\pi^{-1}\CO_{\Delta^n}.
\]
Consequently,
\[
(\CO_V)_x\simeq\CO_{\C^n,0}
\]
for every \(x\in U\).  The sheaf \(\CO_V\) is coherent, and its stalks are
regular local rings of dimension \(n\).
\end{prop}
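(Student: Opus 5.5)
In the adapted chart of Theorem~\ref{thm:eis-normal-form}, a smooth function \(f\) on \(U=B^d\times\Delta^n\) lies in \(\CO_V\) exactly when \(\partial f/\partial t_j=0\) for all \(j\) and \(\partial f/\partial\overline z_k=0\) for all \(k\). This holds because \(d_Vf\) is the restriction of \(df\) to \(V\), and \(V\) is spanned by these vector fields. The plan is to show, on every open \(W\subset U\), that such \(f\) are exactly the functions locally of the form \(g\circ\pi\) with \(g\) holomorphic. This identifies \(\CO_V|_U\) with \(\pi^{-1}\CO_{\Delta^n}\).

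\textbf{Step 1: the sheaf identification.} For the inclusion \(\pi^{-1}\CO_{\Delta^n}\subset\CO_V|_U\): if \(g\) is holomorphic on an open subset of \(\Delta^n\), then \(g\circ\pi\) is independent of \(t\) and satisfies \(\partial/\partial\overline z_k=0\), so it lies in \(\CO_V\). For the converse, I would work at the level of stalks, which suffices for an isomorphism of sheaves. Fix \(x=(t_0,z_0)\) and \(f\in\CO_V(W)\). Shrink \(W\) to a product \(B'\times\Delta'\) with \(B'\) a ball, hence connected. Since \(\partial_{t_j}f=0\) on the connected fibres \(B'\times\{z\}\), \(f\) is constant on each fibre, so \(f=g\circ\pi\) with \(g(z)=f(t_0,z)\). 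The function \(g\) is smooth and satisfies \(\partial g/\partial\overline z_k=0\), so it is holomorphic on \(\Delta'\). The resulting stalk maps \((\pi^{-1}\CO_{\Delta^n})_x=\CO_{\Delta^n,\pi(x)}\to(\CO_V)_x\) are therefore bijective, compatibly with restriction. This gives the first claim, and since \(\CO_{\Delta^n,\pi(x)}\cong\CO_{\C^n,0}\), also the second.

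\textbf{Step 2: regularity and dimension.} These properties transfer directly from the stalk isomorphism. \(\CO_{\C^n,0}\) is a regular local ring of dimension \(n\): it is Noetherian by the Weierstrass preparation theorem, its maximal ideal is generated by \(z_1,\dots,z_n\), and its completion is \(\C[[z_1,\dots,z_n]]\).

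\textbf{Step 3: coherence, the main obstacle.} Oka's theorem says \(\CO_{\Delta^n}\) is coherent. The point needing care is that coherence is a local condition on \(X\), so it suffices to prove it on each adapted chart. There \(\pi^{-1}\) is exact and commutes with finite direct sums. Given a morphism \(\CO_V^m|_W\to\CO_V|_W\) on \(W\subset U\), I would shrink \(W\) around a point to a product \(B'\times\Delta'\), with \(B'\) connected as in Step 1. By Step 1, the morphism is \(\pi^{-1}\) of a morphism \(\CO_{\Delta'}^m\to\CO_{\Delta'}\), since its components are sections of \(\CO_V\). The kernel is then \(\pi^{-1}\) of the kernel downstairs, which is locally finitely generated by Oka's theorem. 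Hence the kernel upstairs is locally finitely generated, and \(\CO_V\) is coherent.
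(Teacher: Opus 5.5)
Your proposal is correct and follows essentially the same route as the paper. You reduce \(d_Vf=0\) to \(\partial_t f=0\), \(\partial_{\overline z}f=0\) in adapted coordinates, shrink to a product \(B'\times\Delta'\) with connected real factor so that sections are \(g\circ\pi\) with \(g\) holomorphic, obtain coherence by writing a map of free modules as \(\pi^{-1}\) of an analytic map and applying Oka, and transport regularity from \(\CO_{\C^n,0}\); you are only more explicit than the paper about fibre connectedness and about why \(\CO_{\C^n,0}\) is regular (the paper cites Grauert--Remmert).
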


\begin{proof}
In the coordinates of Theorem~\ref{thm:eis-normal-form}, the equation
\(d_Vf=0\) is
\[
\frac{\partial f}{\partial t_a}=0,
\qquad
\frac{\partial f}{\partial\overline z_j}=0.
\]
Hence \(f\) is independent of \(t\) and holomorphic in \(z\), which gives the
first two assertions.  To check coherence, shrink to a product
\(B'\times\Delta'\) with \(B'\) connected.  Sections of
\(\pi^{-1}\CO_{\Delta'}\) on this product are precisely
\(\CO(\Delta')\).  Hence the kernel of a map between finite free
\(\pi^{-1}\CO_{\Delta'}\)-modules is the inverse image of the corresponding
analytic kernel on \(\Delta'\), which is locally finitely generated by Oka's
coherence theorem.  Thus \(\pi^{-1}\CO_{\Delta'}\), and hence \(\CO_V\), is
coherent.  Finally, the convergent power-series ring
\(\CO_{\C^n,0}\) is regular of dimension \(n\); see
\cite{GrauertRemmert1984}.
\end{proof}

\begin{cor}\label{cor:coherent-resolution}
Every coherent \(\CO_V\)-module has, locally, a finite free resolution of
projective dimension at most \(n\), equivalently a resolution with at most
\(n+1\) nonzero free terms.
\end{cor}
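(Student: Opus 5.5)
The plan is to reduce to the stalks of \(\CO_V\) and then spread the conclusion back out using coherence, both supplied by Proposition~\ref{prop:local-ring}. Let \(\mathcal F\) be a coherent \(\CO_V\)-module and fix \(x\in X\). Coherence gives a neighbourhood \(W\) of \(x\) and a presentation
\[
\CO_V^{m_1}|_W\xrightarrow{\ \varphi_1\ }\CO_V^{m_0}|_W\longrightarrow\mathcal F|_W\longrightarrow 0 .
\]
I will build the resolution one step at a time. The kernel \(\mathcal K_1=\ker\varphi_1\) is again coherent, since \(\CO_V\) is coherent and kernels of maps between coherent modules are coherent. So after shrinking \(W\) there is a surjection \(\CO_V^{m_2}\to\mathcal K_1\). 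Repeating this gives a free resolution \(\cdots\to\CO_V^{m_1}\to\CO_V^{m_0}\to\mathcal F\to0\) on a neighbourhood of \(x\). The neighbourhood shrinks finitely often, because I will stop after step \(n\).

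The point of stopping after step \(n\) is to make the last kernel free. Let \(\mathcal K_{n-1}\) be the image of the \(n\)-th map, so that
\[
0\to\mathcal K_{n-1}\to\CO_V^{m_{n-1}}\to\cdots\to\CO_V^{m_0}\to\mathcal F\to 0
\]
is exact. Stalks at \(x\) give an exact sequence over \(R=(\CO_V)_x\cong\CO_{\C^n,0}\). By Proposition~\ref{prop:local-ring}, \(R\) is a regular local ring of dimension \(n\), so by Serre's theorem it has global dimension \(n\). The stalk \((\mathcal K_{n-1})_x\) is an \(n\)-th syzygy of the finitely generated module \(\mathcal F_x\) (finitely generated because \(\mathcal F\) is coherent). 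It is therefore projective, and hence free since \(R\) is local: \((\mathcal K_{n-1})_x\cong R^{r}\).

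It remains to spread this freeness from the stalk \(x\) to a neighbourhood, which is the only slightly non-formal step. \(\mathcal K_{n-1}\) is coherent, so I choose sections \(s_1,\dots,s_r\) near \(x\) whose germs form a basis of \((\mathcal K_{n-1})_x\). They define a map \(\psi:\CO_V^r\to\mathcal K_{n-1}\) of coherent sheaves that is an isomorphism at the stalk \(x\). Both \(\ker\psi\) and \(\operatorname{coker}\psi\) are coherent and hence of finite type. A finite-type sheaf whose stalk at \(x\) vanishes also vanishes on a neighbourhood of \(x\), because finitely many generating sections all have zero germ at \(x\) and so vanish nearby. Thus \(\psi\) is an isomorphism near \(x\).

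Putting \(\CO_V^r\) in place of \(\mathcal K_{n-1}\) gives, near \(x\), an exact sequence
\[
0\to\CO_V^{r}\to\CO_V^{m_{n-1}}\to\cdots\to\CO_V^{m_0}\to\mathcal F\to 0 .
\]
This is a free resolution of length at most \(n\), that is, with at most \(n+1\) nonzero free terms. That is exactly the statement: length at most \(n\) and at most \(n+1\) terms are the same condition, since the terms are indexed \(0,\dots,n\).
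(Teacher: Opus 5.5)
Your proof is correct and follows essentially the paper's route: Proposition~\ref{prop:local-ring} makes the stalk $(\CO_V)_x\cong\CO_{\C^n,0}$ regular of dimension $n$, which forces the $n$-th syzygy to be free, and coherence of $\CO_V$ spreads this stalkwise statement to a neighbourhood of $x$. Your argument that $\psi$ is an isomorphism near $x$, because its kernel and cokernel are of finite type with vanishing stalk at $x$, spells out what the paper compresses into ``matrices and their relations are represented after shrinking the adapted chart.''
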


\begin{proof}
This is the corresponding statement over the regular local ring
\(\CO_{\C^n,0}\), transported through Proposition~\ref{prop:local-ring}.
Matrices and their relations are represented after shrinking the adapted
chart.
\end{proof}

\begin{rem}
Korman's Theorem~7 identifies locally free \(\CO_V\)-modules with smooth
vector bundles carrying a flat \(V\)-connection \cite{Korman2014}.  This is
the degree-zero case of the local result proved in Section~\ref{sec:local-mc}.
\end{rem}

\section{Finite superconnections and sheafification}

Set
\[
\CA^\bullet=\Gamma(X,\CA_X^\bullet),
\qquad
\CA^0=\cinf(X).
\]
A \emph{cohesive module} over \(\CA^\bullet\) is a bounded graded,
finitely generated projective right \(\CA^0\)-module \(E^\bullet\), together
with a degree-one \(\mathbb Z\)-connection
\[
\mathbb E:
E^\bullet\otimes_{\CA^0}\CA^\bullet
\longrightarrow
E^\bullet\otimes_{\CA^0}\CA^\bullet
\]
which satisfies the Leibniz rule and \(\mathbb E^2=0\).  We denote Block's
dg category of cohesive modules by \(\mathcal P_{\CA^\bullet}\); see
\cite[Section~2]{Block2010}.

The sheafification of \(E^p\) is
\[
\widetilde E^p
=\CA_X^0\otimes_{\CA^0}E^p.
\]
Extending the \(\mathbb Z\)-connection defines a dg
\(\CA_X^\bullet\)-module
\[
\Phi(E,\mathbb E)
=
\left(
\CA_X^\bullet\otimes_{\CA_X^0}\widetilde E^\bullet,
\mathbb E
\right).
\]
Restriction of scalars along the quasi-isomorphism
\(\CO_V\to\CA_X^\bullet\) then regards \(\Phi(E,\mathbb E)\) as an object of
the derived category of \(\CO_V\)-modules.  Notice that the first tensor
product is over \(\CA^0\), not over \(\CA^\bullet\).

We use \(D_{\mathrm{perf}}^B(X,\CO_V)\) for the derived category of globally
bounded perfect dg sheaves in the sense of
\cite[Definition~7.3 and the paragraph following Remark~7.2]{CHL2021}.

\begin{lem}[Finite smooth generators]
\label{lem:finite-smooth-generators}
Let \(M\) be a finite-dimensional paracompact smooth manifold and let
\(E\to M\) be a smooth complex vector bundle of constant finite rank.  Then
\(E\) is a direct summand of a finite-rank trivial bundle, and
\(\Gamma(M,E)\) is a finitely generated projective
\(\cinf(M)\)-module.
\end{lem}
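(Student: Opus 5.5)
The plan is the standard Serre--Swan argument, made finite by a covering-dimension bound.

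First, \(M\) has finite dimension \(m\) and is paracompact, so it has covering dimension \(m\). Hence there is an open cover by \(m+1\) open sets \(W_0,\dots,W_m\), each a disjoint union of open sets over which \(E\) is trivial. To get this, I take a trivializing cover, refine it to a good locally finite cover, and color the refinement by the standard argument. A concrete route: take a triangulation subordinate to a trivializing cover. For each \(k\), let \(W_k\) be the union of the open stars of barycenters of \(k\)-simplices in the barycentric subdivision. These stars are pairwise disjoint for fixed \(k\), and each lies in a trivializing set after refining. This finite cover is the only nonroutine step, since for noncompact \(M\) a naive trivializing cover may be infinite. Everything else is routine.

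Since \(E|_{W_k}\) is trivial on each disjoint piece and the rank \(r\) is constant, \(E|_{W_k}\cong W_k\times\C^r\) for each \(k\). Choose a smooth partition of unity \(\{\rho_k\}\) subordinate to \(\{W_k\}\), and set \(\sigma_k=\sqrt{\rho_k}\). To make \(\sigma_k\) smooth, I use functions \(\sigma_k\) with \(\sum\sigma_k^2=1\) and \(\operatorname{supp}\sigma_k\subset W_k\), obtained by normalizing bump functions. Define
\[
\iota:E\to M\times\C^{r(m+1)},\qquad \iota(v)=\bigl(\sigma_0\phi_0(v),\dots,\sigma_m\phi_m(v)\bigr),
\]
where \(\phi_k:E|_{W_k}\to\C^r\) are the trivializations, extended by zero outside \(W_k\). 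Define \(p:M\times\C^{r(m+1)}\to E\) by \(p(w_0,\dots,w_m)=\sum_k\sigma_k\phi_k^{-1}(w_k)\). Then \(p\circ\iota=\sum\sigma_k^2=\mathrm{id}\). So \(\iota\circ p\) is a smooth idempotent endomorphism of the trivial bundle with image \(\iota(E)\cong E\), and \(\ker(\iota p)\) is a complementary subbundle. This gives \(E\oplus\ker(\iota p)\cong M\times\C^{N}\) with \(N=r(m+1)\).

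Finally, I take global sections. Since \(\Gamma\) is additive,
\[
\Gamma(M,E)\oplus\Gamma(M,\ker\iota p)\cong\cinf(M)^{N},
\]
so \(\Gamma(M,E)\) is a direct summand of a finite free \(\cinf(M)\)-module. It is therefore finitely generated projective, with generators \(p(e_j)\) for the standard basis \(e_j\).
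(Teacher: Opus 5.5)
Your proof is correct and follows the same strategy as the paper. Finite covering dimension lets you refine a trivializing cover into \(m+1\) families of pairwise disjoint trivializing sets, and a partition of unity then patches local frames to realize \(E\) as a summand of a trivial bundle of rank \(r(m+1)\). The only differences are cosmetic: you obtain the coloring from the barycentric stars of a smooth triangulation rather than from Ostrand's discrete-refinement theorem, and you split with the explicit pair \(\iota,p\) satisfying \(\sum_k\sigma_k^2=1\) instead of splitting a surjection with a Hermitian metric.
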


\begin{proof}
Put \(d=\dim M\) and \(r=\operatorname{rank}E\).  The space \(M\) is
metrizable and has covering dimension \(d\).  By Ostrand's discrete-refinement
theorem \cite[Theorem~1]{Ostrand1965}, a trivializing cover has a refinement
\(\{W_{a\lambda}\}_{0\leq a\leq d,\ \lambda\in\Lambda_a}\)
such that each family \(\{W_{a\lambda}\}_{\lambda\in\Lambda_a}\) is
discrete.  Choose a subordinate smooth partition of unity
\(\{\rho_{a\lambda}\}\) and local frames
\(\{e_{a\lambda,j}\}_{j=1}^r\).  Since
\(\operatorname{supp}\rho_{a\lambda}\subset W_{a\lambda}\), the products
\(\rho_{a\lambda}e_{a\lambda,j}\) extend smoothly by zero.  The sums
\(s_{a,j}:=\sum_\lambda\rho_{a\lambda}e_{a\lambda,j}\) are smooth because
each is locally either zero or one term.  At every point some
\(\rho_{a\lambda}\) is nonzero, so the corresponding \(r\) sections span
the fiber.  Thus the \(s_{a,j}\) define a surjection
\(M\times\C^{(d+1)r}\to E\).  Its kernel is a smooth subbundle, which a
Hermitian metric splits.  Global sections then exhibit \(\Gamma(M,E)\) as a
direct summand of \(\cinf(M)^{(d+1)r}\).
\end{proof}

Chuang--Holstein--Lazarev state their condition \({(*)}\) for every free
graded \(\C\)-module \(G\).  We write \({(*)}_{\mathrm{fg}}\) for the same
condition restricted to finitely generated free graded modules.  Such a
module is finite-dimensional and bounded.

\begin{lem}\label{lem:finite-condition}
In the proof of \cite[Proposition~7.4 and Theorem~7.10]{CHL2021} for
perfect twisted modules, it is enough to impose their condition \({(*)}\)
for finite-dimensional graded vector spaces \(G\).
\end{lem}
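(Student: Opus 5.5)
The plan is to trace where condition \({(*)}\) enters the argument of \cite[Proposition~7.4 and Theorem~7.10]{CHL2021} and check that, for \emph{perfect} twisted modules, it is only ever applied to a graded vector space \(G\) that is already finite-dimensional.

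Recall the shape of condition \({(*)}\). For a fine sheaf of dgas \(\CA_X^\bullet\) and a free graded module \(G\), any Maurer--Cartan element \(x\) in the sheaf of dgas \(\End(G)\otimes\CA_X^\bullet\) is, locally near each point, gauge/homotopy equivalent to one of the form \(x_0\otimes 1\) with \(x_0\in\End(G)\) of degree one and \(x_0^2=0\). Condition \({(*)}\) is used only in the proof that the sheafification functor \(\Phi\) lands in the perfect locally bounded (resp.\ globally bounded) dg sheaves, and that objects there are locally quasi-isomorphic to \(\CA_X^\bullet\otimes G\) with constant differential. Essential surjectivity and full faithfulness are then derived from fineness and a partition-of-unity gluing argument in which no further instance of \({(*)}\) appears.

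The key steps, in order, are as follows.

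First, for a cohesive module \((E,\mathbb E)\) in \(\mathcal P_{\CA^\bullet}\), Lemma~\ref{lem:finite-smooth-generators} shows that near any point each \(E^p\) is the sections of a trivial bundle of finite rank. Since \(E^\bullet\) is bounded, the local model is \(\CA_X^\bullet\otimes G\) with \(G=\bigoplus_p\C^{r_p}\), which is finitely generated free. The condition is therefore invoked only for this \(G\), which is exactly the case covered by \({(*)}_{\mathrm{fg}}\).

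Second, on the target side, \cite{CHL2021} describe perfect objects of \(D^B_{\mathrm{perf}}\) as locally quasi-isomorphic to bounded complexes of finite free modules. When their proof of Theorem~7.10 uses \({(*)}\) to compare such an object with a twisted module, the \(G\) arising is again a bounded finite free module, namely the local perfect model. Only \({(*)}_{\mathrm{fg}}\) is needed here as well.

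Third, we must check that no auxiliary construction enlarges \(G\) to an infinitely generated module. The constructions to inspect are cones, shifts, direct sums of finitely many objects, and the idempotent completions used to pass between projective and free modules. Cones and finite sums keep \(G\) finite. Direct summands are handled by using the idempotent to split inside a finite \(G\oplus G'\), applying \({(*)}_{\mathrm{fg}}\) to the finite free module \(G\oplus G'\), and restricting.

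The main obstacle is the third step, in the places where their general argument may implicitly use arbitrary free \(G\). The likely places are the step establishing that the relevant Hom complexes and the homotopy category are unaffected by infinite sums, and any use of Brown representability or compact generation. My approach there is to avoid those parts: for the perfect subcategory, full faithfulness of \(\Phi\) follows by computing \(\rhom\) between locally finite free models. Quasi-isomorphism of the local Hom complexes follows from \({(*)}_{\mathrm{fg}}\) applied to \(G_1\oplus G_2\), by considering the Maurer--Cartan element of block form with a Hom-component. Essential surjectivity onto perfect objects is then a finite gluing over a locally finite cover, using only finite-rank local models.
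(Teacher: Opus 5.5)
Your strategy is the paper's: audit where \({(*)}\) is used and check that the relevant \(G\) is always finite. The audit itself, however, is never carried out, and the accounting you give contradicts itself.

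\begin{itemize}
\item You first say \({(*)}\) is used only for local perfection, and that full faithfulness and essential surjectivity need no instance of it.
\item Your second step then says Theorem~7.10 applies \({(*)}\) to compare target objects with twisted modules.
\item Your last paragraph uses \({(*)}_{\mathrm{fg}}\) on \(G_1\oplus G_2\) to obtain full faithfulness.
\end{itemize}

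Since the actual uses are never pinned down, you are left with a speculative ``main obstacle'' (infinite sums, Brown representability). You sidestep it by re-proving parts of the comparison yourself rather than resolving it, and that obstacle is exactly the content of the lemma.

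The missing observation is how CHL's argument is organized:
\begin{itemize}
\item Proposition~7.4 is proved first for \emph{finitely generated} twisted modules. Their underlying module is \(G\otimes\CA^\bullet\) with \(G\) finitely generated free by \cite[Definition~3.1]{CHL2021}, hence finite-dimensional and bounded.
\item \({(*)}\) enters only in Lemma~7.5 (local perfection of one such object) and Lemma~7.6 (mapping complexes of two such objects).
\item Passing to perfect twisted modules is homotopy idempotent completion, which introduces no new Maurer--Cartan element.
\item The essential-surjectivity input to Theorem~7.10, Lemma~7.8, uses strictification and Block's quasi-representability but not \({(*)}\).
\end{itemize}
Once this is recorded, no obstacle remains and nothing needs re-proving.

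Your replacement arguments would not close the gap in any case.
\begin{itemize}
\item Handling direct summands by applying \({(*)}_{\mathrm{fg}}\) to \(G\oplus G'\) and ``restricting'' is unjustified: a homotopy gauge equivalence on the sum need not respect the summand. Idempotent completion has to be handled formally.
\item The essential-surjectivity sketch, ``a finite gluing over a locally finite cover,'' ignores that such a cover of noncompact \(X\) is infinite. There the global amplitude and rank bounds and Block's quasi-representability theorem are indispensable; compare Example~\ref{ex:unbounded-descent}.
\item Your formulation of \({(*)}\) is the de Rham one. Local reduction to a constant differential \(x_0\otimes1\) fails for an EIS with \(n>0\). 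Near a point of \(\{z_1=0\}\), the complex \(\CO\xrightarrow{z_1}\CO\) has non-free cohomology, so it cannot be equivalent to a constant model. The correct local model is a Maurer--Cartan element over \(\CO_V(U)\otimes\End(G)\), as in Theorem~\ref{thm:local-mc}.
\end{itemize}
The last point does not change which \(G\) occur, but your claim that objects are locally \(\CA_X^\bullet\otimes G\) with constant differential is false.
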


\begin{proof}
The proof of \cite[Proposition~7.4]{CHL2021} first treats finitely generated
twisted modules.  Such a module has underlying graded module
\(G\otimes\CA^\bullet\), where \(G\) is a finitely generated free graded
\(\C\)-module \cite[Definition~3.1]{CHL2021}.  A finite homogeneous
generating set makes \(G\) finite-dimensional and supported in only finitely
many degrees.

Condition \({(*)}\) enters that proof only twice.  In
\cite[Lemma~7.5]{CHL2021}, it is applied to the finite \(G\) underlying one
finitely generated twisted module to prove local perfection.  In
\cite[Lemma~7.6]{CHL2021}, it is applied to the finite graded modules
underlying two such objects to compare their mapping complexes.  The passage
from finitely generated to perfect twisted modules is then made by homotopy
idempotent completion and introduces no new Maurer--Cartan element.
Finally, \cite[Theorem~7.10]{CHL2021} combines Proposition~7.4 with essential
surjectivity from Lemma~7.8; the latter uses strictification of perfect
sheaves and Block's quasi-representability theorem, but not condition
\({(*)}\).  Thus every use of \({(*)}\) in this comparison has
finite-dimensional \(G\).
\end{proof}

\begin{rem}\label{rem:infinite-G}
We neither prove nor use condition \({(*)}\) for arbitrary infinite-rank or
unbounded free graded \(G\).  Lemma~\ref{lem:finite-condition} isolates the
finite form required for the perfect categories appearing in this paper, so
none of our statements implies the unrestricted condition.
\end{rem}

\section{The local Maurer--Cartan theorem}\label{sec:local-mc}

For a dga \(A\) and a finite-dimensional graded vector space \(G\), write
\(\MC_{\mathrm{dg}}(A\otimes\End(G))\) for the Maurer--Cartan dg category of
\cite[Section~2]{CHL2021}.  Isomorphism in its homotopy category is called
\emph{homotopy gauge equivalence}.

\begin{thm}[Local Maurer--Cartan reduction]\label{thm:local-mc}
Let \(x\in X\), let \(U'\) be a neighborhood of \(x\), and let \(G\) be a
finite-dimensional graded complex vector space.  For every
\[
\xi\in
\MC\!\left(
\CA_X^\bullet(U')\widehat\otimes\End(G)
\right)
\]
there are an adapted product neighborhood
\[
x\in U=B^d\times\Delta^n\subset U'
\]
and
\[
\eta\in
\MC\!\left(
\CO_V(U)\otimes\End(G)
\right)
\]
such that \(\xi|_U\) and the image of \(\eta\) are homotopy gauge equivalent
in
\[
\MC_{\mathrm{dg}}\!\left(
\CA_X^\bullet(U)\widehat\otimes\End(G)
\right).
\]
Thus \(\CA_X^\bullet\) satisfies condition
\({(*)}_{\mathrm{fg}}\).
\end{thm}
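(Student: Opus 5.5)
We first shrink \(U'\) to an adapted chart \(U_0=B^d\times\Delta^n\) using Theorem~\ref{thm:eis-normal-form}. There the Chevalley--Eilenberg dga is
\[
\CA_X^\bullet(U_0)\simeq\Omega^\bullet(B^d)\widehat\otimes\CA^{0,\bullet}(\Delta^n),
\]
with \(d_V=d_t+\delb\). We choose a ball \(B^d\) that is star-shaped about the \(t\)-coordinate of \(x\). Set \(\mathfrak g=\CA_X^\bullet(U_0)\widehat\otimes\End(G)\), and write \(\xi=\sum_{p}\xi_p\) by real form degree \(p\) in the \(dt\)'s.

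\textbf{Step 1: remove the real directions.} We use the radial homotopy \(h_s(t,z)=(st,z)\), \(s\in[0,1]\). It is a smooth family of dga endomorphisms of \(\CA_X^\bullet(U_0)\) interpolating between the identity and \(\pi^*\iota^*\), where \(\iota\) is the inclusion of \(\{0\}\times\Delta^n\). Because the \(h_s\) are multiplicative, pulling back gives a Maurer--Cartan element
\[
\Xi=h^*\xi\in\MC\bigl(\mathfrak g\widehat\otimes\Omega^\bullet([0,1])\bigr).
\]
Its restrictions to \(s=1\) and \(s=0\) are \(\xi\) and \(\xi':=\pi^*\iota^*\xi\). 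A Maurer--Cartan element over \(\Omega^\bullet([0,1])\) is a path object for \(\MC_{\mathrm{dg}}\), so \(\xi\) and \(\xi'\) are homotopy gauge equivalent; this is the standard CHL/Block argument. We can also write the equivalence down explicitly: integrating \(\Xi\) along the fiber gives parallel transport of the \(ds\)-component, and this produces a closed degree-zero morphism \(\xi\to\xi'\) with an inverse up to homotopy. Here finite-dimensionality of \(G\) makes the relevant ODE (path-ordered exponential) linear and finite-rank, so the solution exists on \([0,1]\). After this step \(\xi'\) lies in \(\CA^{0,\bullet}(\Delta^n)\otimes\End(G)\), and it is a Maurer--Cartan element for \(\delb\), since \(\iota^*\) is a dga map.

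\textbf{Step 2: remove the positive antiholomorphic degrees.} The element \(\xi'\) is a cohesive module over \(\CA^{0,\bullet}(\Delta^n)\) on the trivial bundle \(G\). We invoke Block's same-bundle Dolbeault gauge theorem, which underlies \cite[Theorem~4.3]{Block2010}. It says that after shrinking to a smaller polydisc \(\Delta'\ni z(x)\), there is a gauge transformation \(g\in\CA^{0,\bullet}(\Delta')\otimes\End(G)\) of total degree \(0\), invertible with degree-zero part invertible, such that \(g\cdot\xi'=\eta\) has no positive form components. Thus \(\eta\in\CO(\Delta')\otimes\End(G)\) with \(\eta^2=0\), which is a finite complex of free \(\CO(\Delta')\)-modules. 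Pulling back by \(\pi\) places \(\eta\) in \(\CO_V(U)\otimes\End(G)\) with \(U=B^d\times\Delta'\) (Proposition~\ref{prop:local-ring}), and \(\pi^*g\) remains a strict gauge equivalence over \(\CA_X^\bullet(U)\). Composing the equivalences from the two steps gives the statement. Condition \({(*)}_{\mathrm{fg}}\) then follows by Lemma~\ref{lem:finite-condition}.

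\textbf{Main obstacle.} The hard part is Step 2. Block's gauge construction inductively solves \(\delb\)-equations for the components of \(\xi'\) of degree \((0,k)\), \(k\ge1\). This requires the Dolbeault lemma with parameters, applied on shrinking polydiscs, and it requires the degree-zero part \(\xi'_0\) to be handled as a holomorphic structure on each graded piece. We would first make each \(\xi'^{0}_{(0,1)}\) integrable, using Korman's/Newlander--Nirenberg trivialization to make each \(G^k\) holomorphically trivial. We would then kill the higher components degree by degree, checking that each obstruction is \(\delb\)-closed because of the Maurer--Cartan equation. Finite-dimensionality and boundedness of \(G\) make the induction terminate after finitely many shrinkings, which is why only \({(*)}_{\mathrm{fg}}\) is obtained.
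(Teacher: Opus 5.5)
Your proposal is correct and follows essentially the same route as the paper. Both arguments pass to an adapted chart and use a multiplicative pullback contraction of the real factor to get a homotopy gauge equivalence $\xi\simeq p^*j^*\xi$; the paper gets this from \cite[Proposition~4.6]{CHL2021}, while you realize it by a fiberwise finite-rank parallel transport. Both then apply Block's local Dolbeault gauge lemma on a smaller polydisc and pull back along the projection.

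One caution concerns only your closing sketch. The $(0,1)$-component of $\xi'$ is not integrable a priori, because the Maurer--Cartan equation gives its curvature as $-[\xi'_{(0,0)},\xi'_{(0,2)}]$. So Block's lemma cannot be proved by first trivializing each $G^k$ holomorphically. Since you, like the paper, use that lemma as a black box, the proof itself is unaffected.
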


\begin{proof}
Choose an adapted product chart
\(U_0=B^d\times\Delta^n\subset U'\) centered at \(x=(t_0,z_0)\).  As a
topological dga,
\[
\CA_X^\bullet(U_0)
\simeq
\Omega^\bullet(B^d)\widehat\otimes
\CA^{0,\bullet}(\Delta^n),
\qquad
d_V=d_{B^d}+\delb.
\]
These spaces are nuclear Fr\'echet spaces, so the completed projective tensor
product has the stated product identification
\cite[Theorem~51.6]{Treves1967}; compare
\cite[Theorem~A.6]{CHL2021}.  Tensoring with the finite-dimensional algebra
\(\End(G)\) introduces no further completion issue.

The dga \(\CA_X^\bullet(U_0)\) is the quotient of the complexified de Rham
dga \(\Omega^\bullet_{\C}(U_0)\) by the closed differential ideal generated
by \(V^\perp\).  Chuang--Holstein--Lazarev regard de Rham algebras as dg
Arens--Michael algebras \cite[Section~4 and Section~8.1]{CHL2021}; closed
quotients and finite-dimensional tensor products preserve that property.
Thus all dgas in the following homotopy argument lie in the topological
category required by their Proposition~4.6.

Let \(p:U_0\to\Delta^n\) be projection and
\(j:\Delta^n\to U_0\) the section \(j(z)=(t_0,z)\).  Pullback gives continuous
dga morphisms
\[
p^*:\CA^{0,\bullet}(\Delta^n)\longrightarrow
\CA_X^\bullet(U_0),
\qquad
j^*:\CA_X^\bullet(U_0)\longrightarrow
\CA^{0,\bullet}(\Delta^n),
\]
with \(j^*p^*=1\).  The contraction
\[
H_s(t,z)=((1-s)t+s t_0,z)
\]
gives a smooth homotopy through dga maps from \(1\) to \(p^*j^*\).
Indeed, pullback along
\((t,z,s)\mapsto(H_s(t,z),s)\) is a continuous multiplicative map to the
completed tensor product with \(\Omega^\bullet([0,1])\), so it is a smooth
homotopy in the precise sense of \cite[Section~4]{CHL2021}.
After tensoring with \(\End(G)\), \cite[Proposition~4.6]{CHL2021} implies
that \(\xi|_{U_0}\) is homotopy gauge equivalent to
\(p^*j^*(\xi|_{U_0})\).  This uses the multiplicative pullback homotopy,
not merely the linear chain contraction in the Poincar\'e lemma.

The element \(j^*(\xi|_{U_0})\) is a finite Maurer--Cartan object over the
Dolbeault dga of \(\Delta^n\).  Block's local Dolbeault gauge theorem
\cite[Lemma~4.5]{Block2010}, equivalently the local condition used in
\cite[Theorem~8.3]{CHL2021}, gives a smaller polydisc
\(\Delta'\ni z_0\) and
\[
\eta\in
\MC\!\left(\CO(\Delta')\otimes\End(G)\right)
\]
whose image is homotopy gauge equivalent to
\(j^*(\xi|_{U_0})|_{\Delta'}\).  Pulling this equivalence back by \(p\) and
using
\(\CO_V(B^d\times\Delta')=\CO(\Delta')\) proves the result with
\(U=B^d\times\Delta'\).
\end{proof}

\begin{rem}\label{rem:boundary-cases}
If \(d=0\), the contraction step is the identity and the proof is Block's
Dolbeault argument.  If \(n=0\), the Dolbeault step is vacuous and the proof
is the de Rham homotopy-invariance argument of
\cite[Theorem~8.1]{CHL2021}.  If \(G\) is concentrated in degree zero, the
statement reduces to the local parallel-frame theorem for a flat
\(V\)-bundle \cite[Theorem~7]{Korman2014}.
\end{rem}

\section{The global comparison}

\begin{prop}\label{prop:chl-hypotheses}
The morphism of sheaves of dgas
\[
\CO_V\longrightarrow\CA_X^\bullet
\]
satisfies the hypotheses of
\cite[Corollary~3.4, Proposition~7.4 and Theorem~7.10]{CHL2021}, with the
finite form of condition \({(*)}\) from
Lemma~\ref{lem:finite-condition}.
\end{prop}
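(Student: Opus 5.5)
The plan is to check, one at a time, the hypotheses that \cite[Corollary~3.4, Proposition~7.4 and Theorem~7.10]{CHL2021} place on a morphism \(\mathcal R\to\CA_X^\bullet\) from a sheaf of rings to a sheaf of dgas on \(X\). I expect four conditions: a soft (fine) sheaf condition on \(\CA_X^\bullet\); that \(\mathcal R\to\CA_X^\bullet\) is a quasi-isomorphism (a resolution); that each \(\CA_X^p\) is flat over \(\mathcal R\) in the sense their argument needs; and the local Maurer--Cartan condition \({(*)}\). The last one is supplied by Theorem~\ref{thm:local-mc}, and by Lemma~\ref{lem:finite-condition} its finite form \({(*)}_{\mathrm{fg}}\) is all that is used. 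So the work is to check the first three for \(\mathcal R=\CO_V\), and then to confirm that \(\Gamma(X,-)\) of the sheaf dga has the form their global statements require.

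\textbf{Fineness.} Each \(\CA_X^p=\Gamma(\wedge^pV^\vee)\) is a module over \(\CA_X^0=\cinf_X\), which admits smooth partitions of unity. So every \(\CA_X^p\), and every twisted module \(\CA_X^\bullet\otimes_{\CA_X^0}\widetilde E\), is fine, hence acyclic for \(\Gamma(X,-)\). Lemma~\ref{lem:finite-smooth-generators} shows that \(\Gamma(X,\CA_X^p)\) and the global sections of sheafified cohesive modules are finitely generated projective over \(\CA^0\), and that sheafification \(E\mapsto\widetilde E\) is compatible with restriction. This is the input for Corollary~3.4, which identifies \(\Gamma(X,\CA_X^\bullet)\)-modules with sheaves.

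\textbf{Resolution and flatness.} The quasi-isomorphism \(\CO_V\to\CA_X^\bullet\) is the Poincar\'e lemma for \(d_V\). On an adapted chart it is the Künneth statement for \(\Omega^\bullet(B^d)\widehat\otimes\CA^{0,\bullet}(\Delta^n)\), obtained by combining the de Rham contraction from the proof of Theorem~\ref{thm:local-mc} with Dolbeault–Grothendieck, or quoted from \cite[Theorem~6]{Korman2014}. Flatness is the step I expect to be the main obstacle, since this is exactly where the thesis argument was incomplete. I would try to avoid full flatness of \(\cinf_X\) over \(\CO_V\) by checking what CHL actually use. They need \(\CA_X^\bullet\otimes_{\CO_V}-\) to preserve quasi-isomorphisms between perfect complexes, or else that \(\CA_X^\bullet\otimes_{\CO_V}P\to P\otimes\) (resolution) be a quasi-isomorphism for locally free \(P\). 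For locally free, finitely generated \(P\) both are trivial. Since perfect objects are locally bounded complexes of finite free modules (Corollary~\ref{cor:coherent-resolution} in the coherent case), the derived tensor product agrees with the underived one. I would argue that every place CHL invoke flatness applies it only to such complexes. If a genuine flatness statement turns out to be needed, the fallback is the local identification \(\CO_V|_U\simeq\pi^{-1}\CO_{\Delta^n}\) from Proposition~\ref{prop:local-ring}, together with Malgrange's flatness of \(\cinf\) over \(\CO\) on \(\Delta^n\). Stalkwise, that reduces flatness of \(\cinf_{X,x}\) to a product \(\cinf(B^d)\widehat\otimes\cinf(\Delta^n)\) over \(\CO_{\C^n,0}\).

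\textbf{Topological setting and condition \({(*)}\).} For Proposition~7.4 and Theorem~7.10, I would note that the topological dga setting (Arens--Michael, nuclear Fr\'echet sections) was already verified in the proof of Theorem~\ref{thm:local-mc}. Condition \({(*)}_{\mathrm{fg}}\) is the conclusion of that theorem. Finally, Lemma~\ref{lem:finite-condition} shows that these finite data suffice for the perfect twisted-module comparison, which completes the verification.
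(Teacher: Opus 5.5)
Your proposal has a genuine gap: it never checks the \emph{local goodness} hypothesis. That hypothesis requires adapted product charts \(U=B^d\times\Delta^n\) to form a basis of neighborhoods on which \(H^q(U,\CO_V)=0\) for \(q>0\) and \(H^0(U,\CO_V)=\CO_V(U)\). This is a global statement on each chart. It does not follow from the sheaf-level quasi-isomorphism \(\CO_V\to\CA_X^\bullet\), which is only a Poincar\'e lemma after shrinking. The paper proves it in three steps. The fine resolution computes \(R\Gamma(U,\CO_V)\) as \(\Omega^\bullet(B^d)\widehat\otimes\CA^{0,\bullet}(\Delta^n)\). The real factor contracts onto the Dolbeault complex of the \emph{whole} polydisc. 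Cartan's Theorem~B on the Stein polydisc then gives the vanishing. Your K\"unneth remark has the right shape, but you use it, together with a Dolbeault--Grothendieck lemma, only to obtain the resolution property. You need the vanishing of \(H^{0,q}(\Delta^n)\), \(q>0\), on the full polydisc, and you need to state it as the good-neighborhood condition.

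Second, your flatness step targets the wrong ring. The relevant hypotheses concern flatness over the degree-zero algebra, not over \(\CO_V\). On the sheaf level, \((X,\cinf_X)\) is locally ringed and commutative, and each \(\CA_X^q\) is locally free of finite rank over \(\CA_X^0=\cinf_X\). Globally, \(\CA^\#(X)\) is flat over \(\CA^0(X)\), because each \(\CA^q(X)\) is finitely generated projective by Lemma~\ref{lem:finite-smooth-generators} and only \(0\le q\le\operatorname{rank}V\) occur. These finiteness and flatness facts are exactly what \cite[Corollary~3.4]{CHL2021} needs to pass between perfect twisted modules and cohesive modules; that corollary does not identify \(\Gamma(X,\CA_X^\bullet)\)-modules with sheaves. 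Flatness of \(\cinf_X\) over \(\CO_V\) is never used, as the paper states after Theorem~\ref{thm:main}, so what you call the main obstacle is a non-issue. As written, however, you list it as a hypothesis and leave it unresolved. The plan ``I would argue that every place CHL invoke flatness\ldots'' is not carried out. The Malgrange fallback stops at an unproved claim about a completed tensor product over \(\CO_{\C^n,0}\). So your verification is incomplete on its own terms.

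The remaining items agree with the paper: fineness, the resolution property via \cite[Theorem~6]{Korman2014}, and \({(*)}_{\mathrm{fg}}\) from Theorem~\ref{thm:local-mc} together with Lemma~\ref{lem:finite-condition}. You should also record that \(X\) is paracompact, Hausdorff, and of finite covering dimension.
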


\begin{proof}
The local Poincar\'e lemma for an EIS makes
\(\CO_V\to\CA_X^\bullet\) a quasi-isomorphism; see
\cite[Theorem~6]{Korman2014}.  More explicitly, on
\(B^d\times\Delta^n\) it is the tensor product of the de Rham resolution on
the ball and the Dolbeault resolution on the polydisc.

Every \(\CA_X^q\) is the sheaf of smooth sections of the finite-rank bundle
\(\wedge^qV^\vee\), hence is fine.  The ringed space
\((X,\CA_X^0)=(X,\cinf_X)\) is locally ringed and commutative, and each
\(\CA_X^q\) is locally free of finite rank, hence flat, over \(\CA_X^0\).
A smooth manifold is paracompact, Hausdorff, and has finite covering
dimension.

The global dga is concentrated in the finite range
\(0\leq q\leq\operatorname{rank}_{\C}V\).  For every \(q\), the finite-rank
bundle \(\wedge^qV^\vee\) satisfies
Lemma~\ref{lem:finite-smooth-generators}, so
\[
\CA^q(X)=\Gamma(X,\wedge^qV^\vee)
\]
is a finitely generated projective \(\cinf(X)\)-module.  Consequently
\(\CA^\#(X)\) is flat over \(\CA^0(X)=\cinf(X)\).  These two facts are also
the hypotheses needed to pass between perfect twisted and cohesive modules
in \cite[Corollary~3.4]{CHL2021}.

It remains to check local goodness.  On an adapted
\(U=B^d\times\Delta^n\), the fine resolution \(\CA_X^\bullet|_U\) computes
\(R\Gamma(U,\CO_V)\).  Its global-section complex is
\[
\Omega^\bullet(B^d)\widehat\otimes\CA^{0,\bullet}(\Delta^n).
\]
The de Rham contraction of \(B^d\) is a homotopy equivalence from this
complex to the Dolbeault complex of \(\Delta^n\).  Cartan's theorem~B on the
Stein polydisc then gives
\[
H^q(U,\CO_V)=0\quad(q>0),
\qquad
H^0(U,\CO_V)=\CO_V(U),
\]
so adapted product charts form a basis of good neighborhoods.  Finally,
Theorem~\ref{thm:local-mc} supplies the required finite Maurer--Cartan
condition.
\end{proof}

\begin{thm}\label{thm:noncompact}
For every smooth manifold \(X\) with an elliptic involutive structure,
sheafification induces an equivalence
\[
H^0\!\left(\mathcal P_{\CA^\bullet}\right)
\simeq
D_{\mathrm{perf}}^B(X,\CO_V).
\]
\end{thm}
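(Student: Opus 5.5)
The plan is to obtain Theorem~\ref{thm:noncompact} by composing two equivalences supplied by Chuang--Holstein--Lazarev, now that Proposition~\ref{prop:chl-hypotheses} has verified their hypotheses. First, Corollary~3.4 of \cite{CHL2021} identifies Block's cohesive modules \(\mathcal P_{\CA^\bullet}\) with perfect twisted modules over the global dga \(\CA^\bullet=\Gamma(X,\CA_X^\bullet)\), up to quasi-equivalence. The inputs are that each \(\CA^q(X)\) is finitely generated projective over \(\cinf(X)\) (Lemma~\ref{lem:finite-smooth-generators}) and that \(\CA^\bullet\) is bounded. I will check that the functor realizing this equivalence is compatible with sheafification \(\Phi\). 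A twisted module \(G\otimes\CA^\bullet\) with Maurer--Cartan element \(\xi\) sheafifies to \(G\otimes\CA_X^\bullet\), and the passage to a cohesive module only splits off projective summands, which commutes with \(\CA_X^0\otimes_{\CA^0}(-)\).

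Second, Proposition~7.4 and Theorem~7.10 of \cite{CHL2021} state the following for a fine sheaf of dgas \(\CA_X^\bullet\) resolving \(\CO_V\) on a paracompact space of finite covering dimension, with a basis of good opens on which condition \({(*)}\) holds. Sheafification followed by restriction of scalars along \(\CO_V\to\CA_X^\bullet\) gives an equivalence from the homotopy category of perfect twisted modules onto \(D^B_{\mathrm{perf}}(X,\CO_V)\). By Lemma~\ref{lem:finite-condition}, only the finite form \({(*)}_{\mathrm{fg}}\) is needed, and Theorem~\ref{thm:local-mc} supplies it. Full faithfulness comes from their Lemma~7.6 applied on adapted product charts, using the local acyclicity \(H^{>0}(U,\CO_V)=0\) from Proposition~\ref{prop:chl-hypotheses} together with a \v{C}ech/partition-of-unity argument, since the sheaves are fine. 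Essential surjectivity comes from their Lemma~7.8, via strictification and Block's quasi-representability. Composing the two equivalences and identifying the composite with \(\Phi\) gives the theorem.

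I expect the main obstacle to be the bookkeeping in this identification rather than any new analysis. Two points need care. One must confirm that ``globally bounded perfect'' in \cite[Definition~7.3]{CHL2021} matches the image of finite-rank bounded cohesive modules; on a noncompact \(X\) a uniform amplitude bound is required, and it holds automatically because \(G\) is finite-dimensional and \(\CA^\bullet\) has bounded degree. One must also make sure that the local reduction of Theorem~\ref{thm:local-mc}, which produces a shrunken chart depending on \(\xi\), is acceptable in their local condition, which is stated stalkwise or after shrinking. I would first reread Lemma~7.5 of \cite{CHL2021} to confirm that shrinking within a basis of good opens is allowed, and that homotopy gauge equivalence in \(\MC_{\mathrm{dg}}\) is exactly the notion they require. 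If it is, the proof reduces to citing these results in order.
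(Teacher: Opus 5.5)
Your proposal is correct and follows the paper's proof. Proposition~\ref{prop:chl-hypotheses}, using Lemma~\ref{lem:finite-condition} and Theorem~\ref{thm:local-mc}, verifies the Chuang--Holstein--Lazarev hypotheses; their Theorem~7.10 gives the comparison for perfect twisted modules; and \cite[Corollary~3.4]{CHL2021} transfers it to cohesive modules. The only difference is bookkeeping: the paper cites \cite[Corollary~7.13]{CHL2021} explicitly for the passage along the quasi-isomorphism \(\CO_V\to\CA_X^\bullet\), whereas you fold that step into your paraphrase of Theorem~7.10.
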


\begin{proof}
By Proposition~\ref{prop:chl-hypotheses}, Theorem~7.10 and
Corollary~7.13 of \cite{CHL2021} apply.  Their comparison between perfect
twisted modules and perfect dg sheaves transfers to Block's cohesive modules
through \cite[Corollary~3.4]{CHL2021}.
\end{proof}

\begin{rem}[Dg enhancement]
\label{rem:dg-enhancement}
After composing the twisted-module sheafification functor with functorial
cofibrant replacement, \cite[Remark~7.5]{CHL2021} enhances the preceding
equivalence to a quasi-equivalence with the dg category of fibrant-cofibrant
perfect dg sheaves.  This does not assert that the unmodified
sheafification functor is quasi-fully faithful; that stronger conclusion is
the projective case distinguished in the same remark.
\end{rem}

\begin{rem}
For noncompact \(X\), the global-boundedness condition in
\(D_{\mathrm{perf}}^B(X,\CO_V)\) is essential.  We make no identification
there with all bounded complexes having coherent cohomology.
\end{rem}

\begin{prop}\label{prop:perf-coh}
If \(X\) is compact, then
\[
D_{\mathrm{perf}}^B(X,\CO_V)
=D^b_{\mathrm{coh}}(X,\CO_V).
\]
\end{prop}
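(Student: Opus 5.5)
I will prove the two inclusions separately, working with the full triangulated subcategories of \(D(X,\CO_V)\).

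First, \(D_{\mathrm{perf}}^B\subset D^b_{\mathrm{coh}}\). Following \cite[Definition~7.3]{CHL2021}, a globally bounded perfect dg sheaf is one that is locally quasi-isomorphic to a bounded complex of finite free \(\CO_V\)-modules, with a uniform amplitude bound. Its cohomology sheaves are therefore locally cohomology sheaves of finite free complexes. Proposition~\ref{prop:local-ring} says \(\CO_V\) is coherent, so these cohomology sheaves are coherent. The uniform amplitude bound gives boundedness. Compactness is not needed for this direction.

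Second, \(D^b_{\mathrm{coh}}\subset D_{\mathrm{perf}}^B\). Let \(\mathcal F^\bullet\) have bounded coherent cohomology. I would first show that \(\mathcal F^\bullet\) is locally perfect, by induction on the number of nonzero cohomology sheaves using truncation triangles. Perfect complexes on an open set are closed under cones, so it suffices to treat a single coherent sheaf \(\mathcal H\) placed in one degree. By Corollary~\ref{cor:coherent-resolution}, near each point \(\mathcal H\) has a finite free resolution of length at most \(n+1\). Hence \(\mathcal H[k]\) is locally perfect, with amplitude contained in \([-k-n,-k]\).

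The remaining issue is the uniform bound. Suppose \(\mathcal F^\bullet\) has cohomology in degrees \([a,b]\). By the induction above, around every point there is a strictly perfect local model of amplitude in \([a-n,b]\), possibly after a locally bounded count of steps. Compactness lets me cover \(X\) by finitely many such neighborhoods. A finite cover automatically gives uniform amplitude bounds and uniform rank bounds. This is exactly where compactness enters, and it matches the later remark that for noncompact \(X\) the identification fails.

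I expect the main obstacle to be matching precisely the technical definition in \cite{CHL2021}: it may ask for a global object of a specific form, such as a dg sheaf with locally free components, rather than a local perfectness condition. If so, I would argue at the level of derived categories. Both sides are full subcategories of \(D(X,\CO_V)\) closed under isomorphism, and \cite{CHL2021} characterize their category by the local condition together with the bound. Essential surjectivity onto \(D_{\mathrm{perf}}^B\) is in any case already supplied by Theorem~\ref{thm:noncompact}, so no separate global strictification is needed.
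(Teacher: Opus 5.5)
Your proposal is correct and follows the paper's proof. Coherence of \(\CO_V\) gives coherent cohomology for perfect complexes; the local free resolutions of length at most \(n\) make bounded coherent complexes locally perfect, with amplitude widened by at most \(n\); and a finite cover of compact \(X\) supplies the uniform amplitude and rank bounds. Your truncation-triangle induction just spells out the paper's one-line ``hence locally perfect'' step. The closing appeal to Theorem~\ref{thm:noncompact} is unnecessary and does not bear on this statement, since the proposition is an equality of full subcategories of \(D(X,\CO_V)\) cut out by local conditions plus uniform bounds.
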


\begin{proof}
By Proposition~\ref{prop:local-ring}, \(\CO_V\) is coherent and all its
stalks are regular local rings of dimension \(n\).  Hence every bounded
complex with coherent cohomology is locally perfect, with projective
amplitude enlarged by at most \(n\).  Conversely, a perfect complex has
coherent cohomology because \(\CO_V\) is coherent.  A finite adapted cover of
compact \(X\) gives uniform amplitude and rank bounds, which is precisely
global boundedness.
\end{proof}

\begin{thm}[Superconnection realization theorem]\label{thm:main}
Let \(X\) be a compact smooth manifold without boundary and let
\(V\subset T_{\C}X\) be an elliptic involutive structure.  Then
sheafification induces an exact equivalence
\[
\boxed{
H^0\!\left(\mathcal P_{\CA^\bullet}\right)
\simeq
D^b_{\mathrm{coh}}(X,\CO_V).
}
\]
\end{thm}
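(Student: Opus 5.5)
The plan is to obtain Theorem~\ref{thm:main} by composing Theorem~\ref{thm:noncompact} with Proposition~\ref{prop:perf-coh}.

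First, Theorem~\ref{thm:noncompact} holds for every \(X\), and in particular for compact \(X\). It shows that sheafification \(\Phi\) induces an equivalence
\[
H^0(\mathcal P_{\CA^\bullet})\simeq D^B_{\mathrm{perf}}(X,\CO_V).
\]
Second, since \(X\) is compact, Proposition~\ref{prop:perf-coh} identifies \(D^B_{\mathrm{perf}}(X,\CO_V)\) with \(D^b_{\mathrm{coh}}(X,\CO_V)\). I will make explicit that this identification is an equality of full triangulated subcategories of \(D(X,\CO_V)\). As a result, the essential image of \(\Phi\) is exactly \(D^b_{\mathrm{coh}}\).

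Exactness is the remaining claim. I plan to check it at the dg level: \(H^0(\mathcal P_{\CA^\bullet})\) is triangulated because \(\mathcal P_{\CA^\bullet}\) is pretriangulated, with cones and shifts defined as in \cite{Block2010}. Sheafification is a dg functor that visibly commutes with shifts. It also sends the cone of a closed degree-zero morphism \(\phi\colon E\to F\) to the mapping cone of \(\Phi(\phi)\), since \(\CA^0_X\otimes_{\CA^0}-\) is additive and the cone superconnection extends componentwise. Hence \(\Phi\) preserves distinguished triangles. Restriction of scalars along the quasi-isomorphism \(\CO_V\to\CA^\bullet_X\) is also exact. The composite is therefore a fully faithful, essentially surjective exact functor, i.e.\ an exact equivalence.

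The step I expect to need the most care is the compatibility of the CHL sheafification with the functor \(\Phi\) as defined in the paper. Via \cite[Corollary~3.4]{CHL2021}, I will check that the functor passing through perfect twisted modules agrees with \(\Phi\) up to natural isomorphism in the derived category. This uses the flatness of \(\CA^\#(X)\) over \(\cinf(X)\) recorded in Proposition~\ref{prop:chl-hypotheses}, so that the underived tensor product computes the derived one. The cofibrant replacement in Remark~\ref{rem:dg-enhancement} is not needed here, since only the \(H^0\)-level statement is claimed.
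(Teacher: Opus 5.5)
Your proposal is correct and matches the paper's proof, which simply combines Theorem~\ref{thm:noncompact} with Proposition~\ref{prop:perf-coh}. Your extra checks on exactness (compatibility of sheafification with shifts and cones) and on agreement with the CHL twisted-module sheafification fill in details the paper leaves implicit; they are consistent with its argument.
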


\begin{proof}
Combine Theorem~\ref{thm:noncompact} with
Proposition~\ref{prop:perf-coh}.
\end{proof}

\begin{rem}
The proof does not use flatness of \(\cinf_X\) over \(\CO_V\).  That
additional assertion belonged to a direct strictification argument and is
unnecessary in the sheaf-dga comparison used here.
\end{rem}

\section{Descent for the superconnection model}\label{sec:descent}

For an open set \(U\subset X\), write
\[
\CA^\bullet(U)=\Gamma(U,\CA_X^\bullet).
\]
Let \(\mathfrak U=\{U_i\}_{i\in I}\) be a locally finite open cover and set
\(U_{i_0\cdots i_p}=U_{i_0}\cap\cdots\cap U_{i_p}\).
Following \cite[Definition~3.1]{Wei2023}, let
\[
\operatorname{Tw}_{\mathfrak U}(\mathcal P_{\CA})
\]
be the dg category of twisted cohesive objects subordinate to
\(\mathfrak U\).  Thus an object consists of local cohesive modules \(E_i\)
and higher transition morphisms
\[
a^{p,1-p}_{i_0\cdots i_p}:
E_{i_p}|_{U_{i_0\cdots i_p}}
\longrightarrow
E_{i_0}|_{U_{i_0\cdots i_p}}
\]
satisfying the twisted Maurer--Cartan equation
\(\delta a+a\cdot a=0\), with \(a^{0,1}\) reproducing the given
differentials on the local morphism complexes, and with each
\(a^{1,0}_{ii}\) required to be invertible up to homotopy.

The full dg subcategory
\[
\operatorname{Tw}^B_{\mathfrak U}(\mathcal P_{\CA})
\subset
\operatorname{Tw}_{\mathfrak U}(\mathcal P_{\CA})
\]
of \cite[Definition~3.5]{Wei2023} consists of those objects for which there
are integers \(r\leq s\) and
\(N\geq0\), independent of \(i\), such that \(E_i^q=0\) outside
\([r,s]\) and every fiber of \(E_i^q\) has dimension at most \(N\).
This fiberwise formulation allows the rank to vary between connected
components.

\begin{thm}[Globally bounded superconnection descent]
\label{thm:eis-cohesive-descent}
Restriction induces a dg quasi-equivalence
\[
\mathcal T_{\mathfrak U}:
\mathcal P_{\CA^\bullet(X)}
\xrightarrow{\ \simeq_{\mathrm{qe}}\ }
\operatorname{Tw}^B_{\mathfrak U}(\mathcal P_{\CA}).
\]
If \(\mathfrak U\) is finite, then
\[
\operatorname{Tw}^B_{\mathfrak U}(\mathcal P_{\CA})
=
\operatorname{Tw}_{\mathfrak U}(\mathcal P_{\CA}),
\]
and hence
\[
\mathcal P_{\CA^\bullet(X)}
\simeq_{\mathrm{qe}}
\operatorname*{holim}_{[p]\in\Delta}
\prod_{(i_0,\ldots,i_p)\in I^{p+1}}
\mathcal P_{\CA^\bullet(U_{i_0\cdots i_p})}.
\]
Empty intersections are omitted.  The same conclusion holds for a locally
finite cover of compact \(X\), since such a cover has only finitely many
nonempty members.
\end{thm}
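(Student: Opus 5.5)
We adapt Wei's proof of \cite{Wei2023}, which has two halves: quasi-full faithfulness of \(\mathcal T_{\mathfrak U}\) by a partition of unity, and essential surjectivity onto globally bounded twisted objects by quasi-representability. First we record that the sheaf \(\CA_X^\bullet\) has the properties Wei's argument needs. It is a sheaf of dgas over the fine sheaf \(\CA_X^0=\cinf_X\). Every \(\CA^\bullet(U_{i_0\cdots i_p})\) is finitely generated projective over \(\cinf(U_{i_0\cdots i_p})\); this follows from Lemma~\ref{lem:finite-smooth-generators} applied to the manifold \(U_{i_0\cdots i_p}\). A locally finite cover admits a subordinate smooth partition of unity \(\{\rho_i\}\).

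\textbf{Full faithfulness.} For cohesive modules \(E,F\) over \(X\), the mapping complex in \(\operatorname{Tw}_{\mathfrak U}\) between \(\mathcal T_{\mathfrak U}E\) and \(\mathcal T_{\mathfrak U}F\) is the total complex of the \v{C}ech double complex
\[
\prod_{p}\prod_{i_0\cdots i_p}\mathcal P_{\CA}(E|_{U_{i_0\cdots i_p}},F|_{U_{i_0\cdots i_p}}).
\]
Each column is a complex of \(\cinf\)-modules of sections of a sheaf of \(\cinf_X\)-modules. The partition of unity gives the standard homotopy \(h(c)_{i_0\cdots i_{p-1}}=\sum_i\rho_i c_{i\,i_0\cdots i_{p-1}}\), which contracts the augmented \v{C}ech rows. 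Hence restriction \(\mathcal P_{\CA}(E,F)\to\) total complex is a quasi-isomorphism. The twisting terms \(a^{p,1-p}\) with \(p\geq1\) vanish on objects in the image of \(\mathcal T_{\mathfrak U}\), so this is an honest double complex. Local finiteness guarantees that the sums defining \(h\) are locally finite, and so converge.

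\textbf{Essential surjectivity.} This is the main step. Given a twisted object \((E_i,a)\) with uniform bounds \(r\leq s\) and \(N\), we glue it to a single cohesive module up to homotopy. Following Wei, we first form the dg \(\CA_X^\bullet\)-module obtained from the twisted complex: its degree-\(q\) piece is \(\prod_p\prod \Gamma(U_{i_0\cdots i_p},\widetilde E_{i_p})\) with the twisted differential. This module is quasi-cohesive but of infinite rank. Next we apply the quasi-representability result, in the form used in Wei's Theorem, or equivalently through Theorem~\ref{thm:noncompact}. The sheafified glued object is locally quasi-isomorphic to the finite \(E_i\), so it is a perfect dg \(\CO_V\)-module. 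The uniform bounds on amplitude and rank make it lie in \(D^B_{\mathrm{perf}}\). Theorem~\ref{thm:noncompact} then gives a global cohesive module \(E\) with \(\Phi(E)\) isomorphic to it. By the full faithfulness just proved, \(\mathcal T_{\mathfrak U}E\) is homotopy equivalent to \((E_i,a)\).

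The obstacle is exactly this representability step. We must check that globally bounded twisted objects give globally bounded perfect sheaves. The construction of a finite-rank global bundle also needs to use Lemma~\ref{lem:finite-smooth-generators} with Ostrand's \((d+1)\)-colored refinement, so that the number of generators stays bounded by \(N\) times a constant. Both points rely on the uniform constants \(r,s,N\). Without those constants the resulting module need not be finitely generated, which is the phenomenon Example~\ref{ex:unbounded-descent} exhibits.

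For a finite cover, each \(E_i\) is a bounded graded finitely generated projective module. There are finitely many \(i\), so the maxima of the amplitude bounds and the fiber ranks give uniform bounds; hence \(\operatorname{Tw}^B=\operatorname{Tw}\). The homotopy-limit formula then follows from the standard identification of twisted complexes over a finite cover with the homotopy limit of the cosimplicial diagram of dg categories. We use Wei's (or Block--Holstein--Wei's) comparison for this, with empty intersections contributing zero categories. Finally, on compact \(X\) a locally finite cover has only finitely many nonempty members. To see this, note that each point meets finitely many \(U_i\), and finitely many such neighborhoods cover \(X\).
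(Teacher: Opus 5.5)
Your full-faithfulness step is essentially right. One small slip: for \(\mathcal T_{\mathfrak U}E\) the component \(a^{1,0}_{ij}\) is the identity, not zero. It supplies the missing face of the \v{C}ech differential, and only the \(a^{p,1-p}\) with \(p\geq2\) vanish.

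The gap is the last sentence of essential surjectivity. Theorem~\ref{thm:noncompact} gives you a cohesive \(E\) and an abstract isomorphism between \(\Phi(E)\) and your glued sheaf in \(D^B_{\mathrm{perf}}(X,\CO_V)\). You then conclude \(\mathcal T_{\mathfrak U}E\simeq(E_i,a)\) ``by the full faithfulness just proved.'' But full faithfulness of \(\mathcal T_{\mathfrak U}\) only controls morphisms between objects already in its image, and \((E_i,a)\) is exactly the object not yet known to be there. You need an actual closed degree-zero morphism \(\mathcal T_{\mathfrak U}E\to(E_i,a)\) in the twisted category, and a proof that it is a homotopy equivalence. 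Nothing in your argument transports an isomorphism in the derived category of \(\CO_V\)-modules back to \(H^0(\operatorname{Tw}_{\mathfrak U})\). It would suffice to know that sheafification is fully faithful on \(\operatorname{Tw}^B_{\mathfrak U}\) and compatible with \(\Phi\). That, however, is a twisted analogue of the CHL comparison, which you have not proved.

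The paper closes this step without passing to \(\CO_V\) at all. You in fact wrote down the right object, the associated global quasi-cohesive module of \((E_i,a)\). The key point is that restriction is adjoint to it: morphisms \(\mathcal T_{\mathfrak U}E\to(E_i,a)\) correspond to morphisms from \(E\) into that module (Wei, Sections 3.1--3.4). Wei's Propositions 4.2 and 4.3 then replace its degree-zero complex by a bounded complex of finitely generated projective \(\cinf(X)\)-modules. They use only \(\CA^0_X=\cinf_X\), locally finite partitions of unity, and the uniform bounds \(r,s,N\). Together with the flatness of \(\CA^\#(X)\) over \(\cinf(X)\) from Lemma~\ref{lem:finite-smooth-generators}, this is exactly the input for Block's quasi-representability theorem. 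That theorem produces a cohesive \(E\) with a genuine comparison morphism. Wei's formal argument (Propositions 5.2, 5.3, Lemma 5.4) turns its adjoint into the required homotopy equivalence in \(\operatorname{Tw}\). Your ``obstacle'' paragraph about bounded generation belongs to this route, not to Theorem~\ref{thm:noncompact}. Note also that going through Theorem~\ref{thm:noncompact} imports ellipticity and Theorem~\ref{thm:local-mc}, already to see that the glued sheaf is globally bounded perfect. The paper's proof uses neither, and works for any finite-rank involutive subbundle satisfying the smooth finite-type hypotheses.
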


\begin{proof}
We verify that the proof of \cite[Theorem~5.5]{Wei2023} applies to
\(\CA_X^\bullet\).  Its degree-zero sheaf is \(\CA_X^0=\cinf_X\).
For every open \(U\), the bundle
\(\wedge^qV^\vee|_U\) satisfies
Lemma~\ref{lem:finite-smooth-generators}.  Therefore
\[
\CA^q(U)=\Gamma(U,\wedge^qV^\vee)
\]
is a finitely generated projective \(\cinf(U)\)-module.  Since only finitely
many \(q\) occur, the graded algebra \(\CA^\#(U)\) is flat over
\(\CA^0(U)\).

For an inclusion \(W\subset U\), multiplication gives an isomorphism of
graded \(\cinf(W)\)-modules
\[
\cinf(W)\otimes_{\cinf(U)}\CA^\#(U)
\xrightarrow{\ \cong\ }
\CA^\#(W).
\]
Indeed, choose an idempotent presentation of each finite projective module
\(\CA^q(U)\) and restrict its idempotent matrix to \(W\).  This is a graded
base-change statement, not a tensor-product description of the
differential.  For a homogeneous element \(e\) of a right cohesive module,
the pullback superconnection is
\[
\mathbb E_W(e\otimes f)
=
\mathbb E(e)|_W f+(-1)^{|e|}e\otimes d_Vf.
\]
Thus the restriction and pushforward constructions of
\cite[Section~2.2 and Proposition~2.8]{Wei2023} apply.

The definitions of twisted objects, their total morphism complexes, and the
adjunction between restriction and the associated global quasi-cohesive
module are formal for a sheaf of dgas; see
\cite[Sections~3.1--3.4]{Wei2023}.  The gluing of the underlying complexes
uses only \(\CA_X^0=\cinf_X\), locally finite partitions of unity, and the
common amplitude and rank bounds.  Consequently
\cite[Propositions~4.2 and 4.3]{Wei2023} give a bounded complex of finitely
generated projective \(\cinf(X)\)-modules quasi-isomorphic to the
degree-zero complex of the associated global quasi-cohesive module.

This verifies both inputs of Block's quasi-representability theorem: the
finite projective replacement just constructed and the flatness of
\(\CA^\#(X)\) over \(\CA^0(X)\).  Hence
\cite[Theorem~3.13]{Block2010} produces a global cohesive representative.
The formal argument of \cite[Propositions~5.2 and 5.3, Lemma~5.4]{Wei2023}
then proves that \(\mathcal T_{\mathfrak U}\) is a dg quasi-equivalence.

For a finite cover, the amplitudes and fiber ranks of the finitely many local
objects have common maxima, so the bounded and unrestricted twisted
categories agree.  The homotopy-limit description is
\cite[Theorem~3.1]{Wei2023}.
\end{proof}

\begin{rem}
The proof of Theorem~\ref{thm:eis-cohesive-descent} is a direct specialization
of Wei's descent mechanism, not a new descent formalism.  It does not use
ellipticity, Theorem~\ref{thm:local-mc}, or the CHL comparison; it applies to
any finite-rank involutive complex subbundle for which the displayed smooth
finite-type hypotheses hold.
\end{rem}

\begin{example}[Why the global bounds are necessary]
\label{ex:unbounded-descent}
Let \(X=\mathbb N\) be the discrete zero-dimensional manifold, take \(V=0\),
and cover \(X\) by \(U_m=\{m\}\).  Then
\[
\operatorname{Tw}_{\mathfrak U}(\mathcal P_{\CA})
\simeq
\prod_{m\geq1}\mathcal P_{\C}.
\]
The family \(E_m=\C^m\), concentrated in degree zero, is an object of the
right-hand side.  It cannot be the restriction of a global cohesive module.
Indeed, if \(R=\cinf(X)=\prod_{m\geq1}\C\), every term of a bounded complex
of finitely generated projective \(R\)-modules is a direct summand of
\(R^{N_q}\) for some \(N_q\).  Its fiber cohomology therefore has a uniform
finite dimension bound, whereas \(\dim H^0(E_m)=m\).  Thus the unrestricted
homotopy-limit statement is false for genuinely infinite locally finite
covers.  This deliberately disconnected example isolates the obstruction
from uniform finite generation; it does not assert the same obstruction for
every connected manifold.  It explains the bounded target already present
in Wei's theorem rather than strengthening or correcting that theorem.
\end{example}

\section{The coherent heart and transverse monodromy}
\label{sec:coherent-monodromy}

Choose a countable locally finite adapted cover
\[
U_i=B_i^d\times\Delta_i
\]
and let \(T_i=\{0\}\times\Delta_i\subset U_i\) denote its distinguished
transverse slice.  The disjoint union
\[
\mathcal T=\coprod_iT_i
\]
is a complete transverse atlas for \(D_{\R}\).  Such a cover exists by
paracompactness and second countability; countability ensures that
\(\mathcal T\) is again a second-countable complex manifold.  Let
\[
G_{\mathrm{mon}}
=
\operatorname{Mon}_{\mathcal T}(D_{\R})
\rightrightarrows\mathcal T,
\qquad
G_{\mathrm{hol}}
=
\operatorname{Hol}_{\mathcal T}(D_{\R})
\rightrightarrows\mathcal T
\]
be the restrictions to \(\mathcal T\) of the ordinary monodromy and
holonomy groupoids.  Thus monodromy arrows are leafwise paths modulo
leafwise homotopy relative to the endpoints, whereas holonomy arrows are
obtained by identifying paths with the same transverse germ.

The smooth groupoids integrating a regular foliation and their reduction to
complete transversals are reviewed in
\cite[Section~1, Theorem~1 and Proposition~1]{CrainicMoerdijk2001}; see also
\cite{MoerdijkMrcun2003} for the general foliation-groupoid framework.
The transverse holomorphic structure makes both restricted groupoids
possibly non-Hausdorff complex-analytic etale groupoids: in a local
bisection through an arrow, the target map is precisely its holomorphic
transverse transport germ.  In this section, a complex-analytic etale
groupoid is allowed to have a non-Hausdorff arrow manifold; its object
manifold is \(\mathcal T\), and its source and target maps are local
biholomorphisms.

\begin{defn}
For such a groupoid \(G\rightrightarrows G_0\), let
\[
\operatorname{Coh}_{\mathrm{an}}(G)
\]
be the category whose objects are coherent \(\CO_{G_0}\)-modules
\(\mathcal F\) equipped with an isomorphism
\[
\alpha:s^*\mathcal F\xrightarrow{\ \cong\ }t^*\mathcal F
\]
of coherent \(\CO_{G_1}\)-modules satisfying the unit and multiplication
identities.  Morphisms are coherent analytic morphisms compatible with
\(\alpha\).
\end{defn}

\begin{thm}[The coherent heart and monodromy]
\label{thm:coherent-monodromy}
Restriction to \(\mathcal T\), together with leafwise continuation, induces
an equivalence of abelian categories
\[
\operatorname{Coh}(X,\CO_V)
\xrightarrow{\ \sim\ }
\operatorname{Coh}_{\mathrm{an}}(G_{\mathrm{mon}}).
\]
Replacing the complete transverse atlas gives a naturally equivalent
category and equivalence.
\end{thm}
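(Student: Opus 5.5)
We construct the restriction functor \(R\), build an inverse by gluing along plaques, and then check transversal invariance.

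\textbf{The restriction functor.} Let \(\mathcal F\) be a coherent \(\CO_V\)-module. By Proposition~\ref{prop:local-ring}, \(\CO_V|_{U_i}\simeq\pi_i^{-1}\CO_{\Delta_i}\) on each adapted chart \(U_i=B_i^d\times\Delta_i\).

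The first step is a local statement. After shrinking \(U_i\) so that \(B_i\) is a ball, every coherent \(\pi_i^{-1}\CO_{\Delta_i}\)-module on \(U_i\) is canonically \(\pi_i^{-1}\) of a coherent analytic sheaf on \(\Delta_i\), namely its restriction to the slice \(T_i\).

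To prove this, use the local finite presentations of Corollary~\ref{cor:coherent-resolution}. A presentation matrix lives in \(\CO_V(U)=\CO(\Delta)\) on product neighborhoods, so it is constant along plaques. The plaques \(B\times\{z\}\) are contractible and connected. Hence a sheaf that is locally of the form \(\pi^{-1}\mathcal G\) is globally of this form on \(U_i\), and \(\pi_*\) recovers \(\mathcal G\). The underlying fact is that locally constant sheaves on simply connected plaques are constant. I would phrase it as: \(\pi^{-1}\) is fully faithful on sheaves when the fibers are connected, and its essential image is closed under cokernels and kernels, which lets us compare presentations.

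With this in hand, put \(\mathcal F_{\mathcal T}=\coprod_i\mathcal F|_{T_i}\). An arrow \(\gamma\in G_{\mathrm{mon}}\) is a leafwise path from \(T_i\) to \(T_j\). Cover \(\gamma\) by a chain of charts and compose the plaque identifications from the local statement along this chain. This gives germs of isomorphisms \(\mathcal F_{s(\gamma)}\to\mathcal F_{t(\gamma)}\) over the holomorphic transport bisection. Leafwise homotopy invariance follows because homotopies can likewise be subdivided into plaque squares, on which the identifications commute by uniqueness in the local statement. The resulting germs assemble into \(\alpha:s^*\mathcal F_{\mathcal T}\to t^*\mathcal F_{\mathcal T}\), and the cocycle identities hold by concatenation. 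This defines \(R\).

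\textbf{The inverse functor.} Let \((\mathcal G,\alpha)\) be an object of \(\operatorname{Coh}_{\mathrm{an}}(G_{\mathrm{mon}})\). On each \(U_i\) set \(\mathcal F_i=\pi_i^{-1}(\mathcal G|_{T_i})\). On an overlap \(U_i\cap U_j\), every point \(x\) determines a monodromy arrow: take a leafwise path from \(T_j\) to \(x\) inside \(U_j\), followed by a leafwise path from \(x\) to \(T_i\) inside \(U_i\). The plaques are contractible, so this arrow is independent of the choices up to homotopy. Here we genuinely need monodromy rather than holonomy. Applying \(\alpha\) to these arrows gives transition isomorphisms \(\mathcal F_j\to\mathcal F_i\). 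The cocycle condition on triple overlaps follows from the multiplication identity for \(\alpha\), since the composite path is leafwise homotopic to the direct one inside a union of plaques.

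Gluing yields a coherent \(\CO_V\)-module. Both composites are naturally isomorphic to the identity by the uniqueness in the local statement. Full faithfulness on morphisms is handled the same way, by the fully faithful part of the local statement. Exactness is automatic for an equivalence of abelian categories. The second sentence of Theorem~\ref{thm:coherent-monodromy} follows because two complete transverse atlases give Morita equivalent étale groupoids through the bibundle of leafwise paths. The equivalences to \(\operatorname{Coh}(X,\CO_V)\) are compatible with the induced equivalence of equivariant sheaf categories, since this is the same continuation construction.

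\textbf{Expected main obstacle.} The main obstacle is the cocycle check on overlaps. Plaques of \(U_i\) and \(U_j\) may meet in several components. Independence of the arrow therefore needs care: the relevant path lies in a single plaque of \(U_i\cup U_j\) only locally. I would handle this by refining the cover to a good adapted cover. In such a cover each plaque of \(U_i\) meets each plaque of \(U_j\) in a connected, and indeed contractible, set, which is possible by choosing geodesically convex plaques for a leafwise metric. With a non-Hausdorff arrow space we also have to verify that \(\alpha\) is well defined as a sheaf map, and we do this by working germwise on local bisections.
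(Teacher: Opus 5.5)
Your proof is correct and has the same skeleton as the paper's. In both, finite presentations show that coherent \(\CO_V\)-modules are pulled back from transversals. Plaque-locally-constant sheaves are then identified with \(G_{\mathrm{mon}}\)-equivariant sheaves on \(\mathcal T\), \(\CO_V\) is matched with \(\CO_{\mathcal T}\), and Morita invariance handles the change of transversal.

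The difference is the middle step.

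\begin{itemize}
\item The paper cites Kock--Moerdijk for the equivalence between sheaves locally constant along plaques and equivariant sheaves on the monodromy groupoid, including its \'etale Morita reduction to a complete transversal. It therefore needs the local statement only after shrinking.
\item You build that equivalence by hand. You transport along leafwise paths and glue the sheaves \(\pi_i^{-1}(\mathcal G|_{T_i})\) along arrows attached pointwise to overlaps.
\end{itemize}

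The citation is shorter and absorbs continuity, homotopy invariance, and the non-Hausdorff arrow space in one step. Your route is self-contained and shows exactly where monodromy rather than holonomy enters.

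Your route does need the local statement on the whole chart \(U_i\). That statement holds because the plaques are balls. The argument that actually does the work is unique path lifting in the \'etal\'e space of \(\mathcal F\): its leaves are coverings of the simply connected plaques. Full faithfulness of \(\pi^{-1}\), together with closure of its image under kernels and cokernels, does not by itself pass from small boxes to the whole chart. With circle plaques the conclusion would fail.

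You should also state explicitly that the glued transition maps are \(\CO_V\)-linear. This follows from the \(\CO_{G_1}\)-linearity of \(\alpha\) and from the fact that transport of \(\CO_V\) along an arrow is its holomorphic transverse germ.

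The obstacle you anticipate is not actually present. Write \(\gamma^{ij}_x\) for the arrow you attach to \(x\in U_i\cap U_j\). For \(x\in U_i\cap U_j\cap U_k\), the composite \(\gamma^{ij}_x\gamma^{jk}_x\) differs from \(\gamma^{ik}_x\) only by a loop at \(x\) inside the single plaque of \(U_j\) through \(x\). That plaque is a ball, so the loop is null-homotopic, and the cocycle identity follows from multiplicativity of \(\alpha\).

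Different components of a plaque intersection may yield different arrows. This is harmless: the transition isomorphism is defined pointwise and is continuous through local bisections. No refinement to a cover with connected plaque intersections is needed. That is fortunate, since adapted product charts with geodesically convex plaques are not obviously available.
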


\begin{proof}
We first record the analytic refinement of the usual monodromy-sheaf
correspondence.  On a sufficiently small adapted box
\(U=B\times T\), write \(\pi:U\to T\).  By
Proposition~\ref{prop:local-ring},
\[
\CO_V|_U\cong\pi^{-1}\CO_T.
\]
If \(\mathcal F\) is a coherent \(\CO_V\)-module, then, after shrinking to a
smaller product box, it has a finite presentation
\[
(\pi^{-1}\CO_T)^p
\longrightarrow
(\pi^{-1}\CO_T)^q
\longrightarrow
\mathcal F
\longrightarrow0.
\]
The finitely many entries of the first map come, after one further
shrinking, from holomorphic functions on \(T\).  Taking their cokernel on
\(T\) produces a coherent analytic sheaf \(\mathcal G\) such that
\[
\mathcal F|_U\cong\pi^{-1}\mathcal G.
\]
The same finite-presentation argument applies to morphisms.  Hence coherent
\(\CO_V\)-modules and their morphisms are locally constant along plaques and
coherent analytic in the transverse direction.

Kock and Moerdijk prove that sheaves locally constant along the plaques are
equivalently equivariant sheaves for the full monodromy groupoid
\cite[Theorems~2.8 and 3.6]{KockMoerdijk1996}.  Their local connectedness,
openness, and local simple-connectivity hypotheses hold here because the
plaques in an adapted basis are balls.  Restriction to a complete
transversal gives an etale Morita presentation
\cite[Theorem~4.1 and Corollary~4.2]{KockMoerdijk1996}; compare
\cite[Section~1]{CrainicMoerdijk2001}.  Under the resulting equivalence the
ring sheaf \(\CO_V\) corresponds to \(\CO_{\mathcal T}\): on adapted boxes
this is the displayed identification
\(\CO_V=\pi^{-1}\CO_T\), and all transverse transition germs are
holomorphic.  The equivalence of sheaf categories therefore refines to an
equivalence of module categories over these ring objects.  The local
finite-presentation argument above shows that it restricts in both
directions to coherent modules, giving the asserted equivalence.

Restrictions of a foliation groupoid to any two complete transversals are
Morita equivalent.  Passing to their disjoint union gives a common
transverse refinement; its connecting arrows form the usual principal
bibundle.  Locally its structure maps are biholomorphisms, so analytic
coherence is preserved.  The resulting equivalence is therefore independent
of \(\mathcal T\).
\end{proof}

\begin{rem}[Ringed transverse monodromy stack]
\label{rem:ringed-monodromy-stack}
Let
\[
\mathfrak T_{\mathrm{mon}}
=
[\mathcal T/G_{\mathrm{mon}}]
\]
be the analytic stack presented by the restricted monodromy groupoid,
equipped with the ring object induced by \(\CO_{\mathcal T}\).  Theorem
\ref{thm:coherent-monodromy} can be written invariantly as
\[
\operatorname{Coh}(X,\CO_V)
\simeq
\operatorname{Coh}(\mathfrak T_{\mathrm{mon}}).
\]
Changing the complete transversal changes the etale presentation but not
this ringed stack or its coherent category.  This formulation of the
abelian-heart theorem does not by itself give an equivalence between
\(D^b_{\mathrm{coh}}(X,\CO_V)\) and perfect complexes on the ordinary
monodromy stack.  Such an equivalence is false for a general EIS, as
Example~\ref{ex:ordinary-groupoid-derived-failure} shows.  It does hold for
the holomorphic suspensions of
Theorem~\ref{thm:suspension-derived-monodromy}, where invariants are taken
homotopically rather than strictly.
\end{rem}

Let
\[
q:G_{\mathrm{mon}}\longrightarrow G_{\mathrm{hol}}
\]
be the monodromy-to-holonomy quotient, and let \(K=\ker(q)\) be the isotropy
subgroupoid of monodromy loops with trivial transverse holonomy germ.

\begin{cor}[The holonomy-basic coherent heart]
\label{cor:coherent-holonomy-basic}
Pullback is fully faithful and induces an equivalence
\[
q^*:
\operatorname{Coh}_{\mathrm{an}}(G_{\mathrm{hol}})
\xrightarrow{\ \sim\ }
\operatorname{Coh}_{\mathrm{an}}(G_{\mathrm{mon}})^{K=1},
\]
where the category on the right consists of objects for which every
\(k\in K\) acts strictly as the identity.
\end{cor}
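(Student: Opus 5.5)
We will prove the corollary directly from the structure of \(q\) as a surjective local biholomorphism of étale groupoids over the same object manifold \(\mathcal T\), with fibers the \(K\)-cosets. First we record the basic geometric input. Both groupoids have object space \(\mathcal T\), and \(q\) is the identity on objects. A local bisection of \(G_{\mathrm{mon}}\) through an arrow \(\gamma\) has target map equal to the holonomy germ of \(\gamma\). Hence \(q\) maps local bisections isomorphically onto local bisections, so \(q\) is a surjective étale map of arrow manifolds with \(s_{\mathrm{hol}}\circ q=s_{\mathrm{mon}}\) and \(t_{\mathrm{hol}}\circ q=t_{\mathrm{mon}}\). Two arrows have the same image exactly when they differ by an element of \(K\).

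Given \((\mathcal F,\alpha)\in\operatorname{Coh}_{\mathrm{an}}(G_{\mathrm{hol}})\), we define \(q^*(\mathcal F,\alpha)=(\mathcal F,q^*\alpha)\). This is possible because \(s_{\mathrm{mon}}^*\mathcal F=q^*s_{\mathrm{hol}}^*\mathcal F\), and likewise for \(t\). Pullback along the étale map \(q\) preserves coherence and the unit and cocycle identities. For \(k\in K_x\) we have \(q(k)=1_x\), so \((q^*\alpha)_k=\alpha_{1_x}=\mathrm{id}\). Thus \(q^*\) lands in the \(K=1\) subcategory.

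For full faithfulness, note that a morphism on either side is a single map \(\mathcal F\to\mathcal G\) of coherent \(\CO_{\mathcal T}\)-modules. The only question is whether compatibility with \(q^*\alpha\) implies compatibility with \(\alpha\). Compatibility is an equality of two maps \(s^*\mathcal F\to t^*\mathcal G\) of coherent sheaves on the arrow manifold. Since \(q\) is a surjective local biholomorphism, equality can be checked after pullback along \(q\). Concretely, every holonomy germ is locally the image of a monodromy bisection, so the two maps agree on a cover.

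For essential surjectivity, we take \((\mathcal F,\beta)\) with \(K\) acting trivially and descend \(\beta\) along \(q\). On a local bisection \(\sigma\) of \(G_{\mathrm{mon}}\), the map \(\beta|_\sigma\) gives an isomorphism over the open set \(q(\sigma)\subset G_{\mathrm{hol}}\). Suppose two bisections \(\sigma,\sigma'\) have overlapping images. At a point of the overlap, the corresponding arrows \(\gamma,\gamma'\) satisfy \(\gamma'=\gamma k\) with \(k\in K\). The cocycle identity then gives \(\beta_{\gamma'}=\beta_\gamma\beta_k=\beta_\gamma\).

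This is the main obstacle. \(K\) is a bundle of groups that need not be open in \(G_{\mathrm{mon}}\), and the germwise equality \(\beta_\gamma=\beta_{\gamma'}\) must be promoted to equality of the sheaf maps on the whole overlap \(q(\sigma)\cap q(\sigma')\). We will handle this as follows. At each point of the overlap the two bisections define arrows differing by a \(K\)-element, so the two induced isomorphisms agree pointwise on fibers, indeed stalkwise, since \(\beta\) restricted to \(K\) is the identity as a sheaf map on \(K\) with its induced structure. Two morphisms of coherent sheaves agreeing on every stalk are equal. Note that stalks here mean stalks of \(s^*\mathcal F\) at arrows, which are identified with stalks of \(\mathcal F\) at the source point. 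This reduction to stalks is exactly why we need the hypothesis that \(K\) acts \emph{strictly} as the identity, not merely up to isomorphism.

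The local isomorphisms therefore glue to \(\alpha:s^*\mathcal F\to t^*\mathcal F\) on \(G_{\mathrm{hol}}\) with \(q^*\alpha=\beta\). The unit and multiplication identities for \(\alpha\) follow from those for \(\beta\), again checked after pullback along the surjective étale map \(q\). This completes the equivalence.
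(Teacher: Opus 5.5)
Your proposal is correct and follows essentially the same route as the paper. In both, pulled-back actions are trivial on $K$, full faithfulness comes from surjectivity of $q$ on arrows, and essential surjectivity comes from defining the holonomy action through any monodromy lift, which is well defined because two lifts differ by a $K$-arrow. Your stalkwise gluing over images of local bisections simply makes explicit what the paper compresses into ``local holomorphicity follows from the etale quotient charts.''
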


\begin{proof}
A pulled-back action has the form
\(\alpha_g=\beta_{q(g)}\), so every kernel arrow acts as the identity.
Since \(q\) is surjective on arrows, a morphism between two pulled-back
objects is monodromy-equivariant if and only if it is
holonomy-equivariant; hence \(q^*\) is fully faithful.

Conversely, suppose \((\mathcal F,\alpha)\) is \(K\)-trivial.  For a
holonomy arrow \(h\), choose a monodromy lift \(g\) and set
\(\beta_h=\alpha_g\).  Two lifts differ by a kernel arrow, so this is
independent of the lift.  The unit and multiplication identities descend
from those for \(\alpha\), and local holomorphicity follows from the etale
quotient charts.  Thus \((\mathcal F,\alpha)\) descends to
\(G_{\mathrm{hol}}\).
\end{proof}

\begin{example}[Why the heart statement is not derived]
\label{ex:ordinary-groupoid-derived-failure}
Take the de Rham EIS \(V=T_{\C}X\).

If \(X=S^1\), then \(G_{\mathrm{mon}}\) restricted to a point has isotropy
\(\mathbb Z\), whereas \(G_{\mathrm{hol}}\) is the unit groupoid.  A
nontrivial character \(\mathbb Z\to\C^\times\) gives a coherent local system
which is not holonomy-basic.  Even for the trivial representation,
\[
\ext^1_{\C[\mathbb Z]}(\C,\C)\cong\C,
\qquad
\ext^1_{\C}(\C,\C)=0.
\]
Thus the derived analogue of \(q^*\) is not fully faithful.

If \(X=S^2\), then \(G_{\mathrm{mon}}\) and \(G_{\mathrm{hol}}\), restricted
to a point, are both trivial because \(S^2\) is simply connected.  However,
the endomorphism complex of the trivial cohesive object is
\(\Omega^\bullet(S^2;\C)\), and
\[
H^2\Omega^\bullet(S^2;\C)\cong\C.
\]
Consequently the full cohesive category is not the category of perfect
complexes on the ordinary monodromy or holonomy \(1\)-stack.  Ordinary
kernel-triviality is a complete criterion only at the coherent-heart level;
a derived criterion would require coherent higher descent data.

The obstruction here is the higher homotopy of the leaf.  By contrast, the
one-dimensional leaves of a holomorphic suspension are aspherical, and
Theorem~\ref{thm:suspension-derived-monodromy} shows that their ordinary
monodromy action recovers the full Morita-localized category when its
invariants are taken homotopically.
\end{example}

\begin{rem}[The next derived comparison]
\label{rem:transverse-holomorphic-rh}
The higher de Rham integration theorems of Block--Smith and Arias
Abad--Sch\"atz \cite{BlockSmith2014,AriasAbadSchaetz2013}, together with the
regular-foliation extension in \cite{ZengHigherRH}, suggest a genuinely
EIS-specific target built from leafwise infinity-local systems with
transverse Dolbeault coefficients.  The cited companion treats leafwise de
Rham coefficients; it does not prove the coefficient-enriched local theorem
needed here.  That theorem would have to construct an integration functor
\[
\mathcal P_{\Omega^\bullet(B)\widehat\otimes
\CA^{0,\bullet}(\Delta)}
\longrightarrow
\operatorname{Loc}_{\infty}
\left(
\Pi_\infty(B);
\mathcal P_{\CA^{0,\bullet}(\Delta)}
\right)
\]
and prove compatibility with \(\delb\), the mixed superconnection
components, sheafified morphism complexes, and holomorphic changes of
transverse charts.  The local Maurer--Cartan theorem and
Theorem~\ref{thm:eis-cohesive-descent} do not by themselves supply this
integration result.  Apart from the suspension case proved in
Section~\ref{sec:derived-suspensions}, no derived monodromy or
higher-holonomy equivalence is claimed here.
\end{rem}

\section{Basic cases}

\begin{example}[Point case]
If \(X=\{*\}\) and \(V=0\), then
\(\CA^\bullet=\CO_V=\C\).  A cohesive module is a bounded complex of
finite-dimensional vector spaces, and Theorem~\ref{thm:main} is the
tautological equivalence
\[
H^0(\mathcal P_{\C})\simeq D^b_{\mathrm{fd}}(\C).
\]
\end{example}

\begin{example}[Dolbeault case]
If \(X\) is a complex manifold and \(V=T^{0,1}X\), then
\(\CO_V=\CO_X\) and \(\CA_X^\bullet=\CA_X^{0,\bullet}\).
Theorem~\ref{thm:main} is Block's equivalence
\cite[Theorem~4.3]{Block2010}.
\end{example}

\begin{example}[de Rham case]
If \(V=T_{\C}X\), then \(\CO_V\) is the locally constant sheaf \(\C_X\) and
\(\CA_X^\bullet\) is the complexified de Rham dga.  Theorem~\ref{thm:main}
identifies cohesive modules with perfect complexes of finite local systems,
in agreement with \cite[Theorem~8.1]{CHL2021}.
\end{example}

\begin{example}[Product case]
Let \(Y\) be a compact smooth manifold and \(Z\) a compact complex manifold.
On \(X=Y\times Z\), set
\[
V=T_{\C}Y\oplus T^{0,1}Z.
\]
Then
\[
\CA_X^\bullet
\simeq
\Omega^\bullet_Y\widehat\otimes\CA_Z^{0,\bullet},
\qquad
\CO_V\simeq\C_Y\boxtimes\CO_Z.
\]
Let \(\mathcal L,\mathcal M\) be finite-dimensional local systems on \(Y\)
and let \(E,F\) be holomorphic vector bundles on \(Z\).  Their external
products are locally free \(\CO_V\)-modules, and
\begin{align*}
\rhom_{\CO_V}(\mathcal L\boxtimes E,\mathcal M\boxtimes F)
&\simeq
R\Gamma(Y,\mathcal L^\vee\otimes\mathcal M)
\otimes_{\C}^{\mathbf L}
R\Gamma(Z,E^\vee\otimes F),\\
\ext^k_{\CO_V}(\mathcal L\boxtimes E,\mathcal M\boxtimes F)
&\simeq
\bigoplus_{p+q=k}
H^p(Y,\mathcal L^\vee\otimes\mathcal M)
\otimes_{\C}H^q(Z,E^\vee\otimes F).
\end{align*}
Indeed, because the source is locally free, the mixed fine resolution
computes the derived Hom by the total complex
\[
\Omega^\bullet(Y;\mathcal L^\vee\otimes\mathcal M)
\widehat\otimes
\CA^{0,\bullet}(Z;E^\vee\otimes F).
\]
The de Rham and Dolbeault Hodge decompositions split both factors into their
finite-dimensional cohomology and contractible summands, which gives the
displayed K\"unneth formula.

For a concrete mixed computation, take \(Y=S^1\), \(Z=\mathbb P^1\), both
local systems trivial, \(E=\CO_{\mathbb P^1}\), and
\(F=\CO_{\mathbb P^1}(-2)\).  Since
\[
H^\bullet(S^1,\C)=(\C,\C),
\qquad
H^q(\mathbb P^1,\CO(-2))=
\begin{cases}
\C,&q=1,\\
0,&q\ne1,
\end{cases}
\]
we obtain
\[
\ext^k_{\CO_V}
(\C_{S^1}\boxtimes\CO,\C_{S^1}\boxtimes\CO(-2))
\simeq
\begin{cases}
\C,&k=1,2,\\
0,&\text{otherwise}.
\end{cases}
\]
The degree-two class uses simultaneously the de Rham direction of \(S^1\)
and the Dolbeault direction of \(\mathbb P^1\).
\end{example}

\section{Derived monodromy for holomorphic suspensions}
\label{sec:derived-suspensions}

Let \(Z\) be a compact complex manifold, let
\(\phi:Z\to Z\) be a biholomorphism, and form the mapping torus
\[
X_\phi
=
(\mathbb R\times Z)/
\bigl((t+1,z)\sim(t,\phi(z))\bigr).
\]
Equivalently, the deck generator is
\[
\gamma(t,z)=(t-1,\phi(z)).
\]
The invariant structure
\[
\widetilde V
=
T_{\C}\mathbb R\oplus T^{0,1}Z
\subset
T_{\C}(\mathbb R\times Z)
\]
descends to an EIS \(V\) on \(X_\phi\).  Its real foliation is generated
by the image of \(\partial_t\), and \(Z=\{0\}\times Z\) is a complete
transversal.  Thus \(X_\phi\) is a smooth, generally odd-dimensional
mapping torus; ``holomorphic'' refers to its transverse structure.  In this
section \(\CA^\bullet(X_\phi)\) always denotes the dga of this suspended
EIS, whose pullback to \(\R\times Z\) is
\[
\Omega^\bullet(\R)\widehat\otimes\CA^{0,\bullet}(Z),
\qquad
d_V=d_{\R}+\delb,
\]
not the ordinary de Rham dga of \(X_\phi\).

The restricted monodromy groupoid is the action groupoid
\[
G_{\mathrm{mon}}\cong\mathbb Z\ltimes_\phi Z.
\]
Here the positive generator acts by the first-return map \(\phi\).
Consequently, Theorem~\ref{thm:coherent-monodromy} gives
\[
\operatorname{Coh}(X_\phi,\CO_V)
\simeq
\operatorname{Coh}(Z)^\phi,
\]
where an object on the right is a coherent analytic sheaf
\(\mathcal F\) together with an isomorphism
\(\phi^*\mathcal F\cong\mathcal F\).  Ordinary holonomy replaces the
action groupoid by its effectivization, and
Corollary~\ref{cor:coherent-holonomy-basic} imposes strict triviality of
the ineffective isotropy.

\begin{lem}[Contractible real factor]
\label{lem:contractible-real-factor}
Let \(I\subset\R\) be an open interval and let
\(p:I\times Z\to Z\) be projection.  Pullback induces a dg
quasi-equivalence
\[
\mathcal P_{\CA^{0,\bullet}(Z)}
\xrightarrow{\ \simeq_{\mathrm{qe}}\ }
\mathcal P_{\Omega^\bullet(I)\widehat\otimes
\CA^{0,\bullet}(Z)}.
\]
\end{lem}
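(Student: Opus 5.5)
We plan to prove the quasi-equivalence by an explicit homotopy-inverse pair of dg functors induced by dga morphisms. As in the proof of Theorem~\ref{thm:local-mc}, fix \(t_0\in I\), let \(j:Z\to I\times Z\) be the section \(j(z)=(t_0,z)\), and consider
\[
p^*:\CA^{0,\bullet}(Z)\longrightarrow\Omega^\bullet(I)\widehat\otimes\CA^{0,\bullet}(Z),
\qquad
j^*:\Omega^\bullet(I)\widehat\otimes\CA^{0,\bullet}(Z)\longrightarrow\CA^{0,\bullet}(Z),
\]
with \(j^*p^*=1\). Both are continuous dga maps, so they induce dg functors \(p^*,j^*\) between the cohesive categories by extension of scalars \(E\mapsto E\otimes_{\CA^0}\CA'^0\). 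The superconnection is extended by the Leibniz formula displayed in the proof of Theorem~\ref{thm:eis-cohesive-descent}. We have \(j^*\circ p^*=1\) on the nose.

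First we show that \(p^*\) is quasi-fully faithful. For cohesive modules \(E,F\) over \(\CA^{0,\bullet}(Z)\), the morphism complex \(\mathcal P(p^*E,p^*F)\) is
\[
\Omega^\bullet(I)\widehat\otimes\mathcal P_{\CA^{0,\bullet}(Z)}(E,F),
\]
equipped with the differential \(d_I\otimes1\pm1\otimes D\). This identification uses the finite projectivity of \(E\), so that \(\operatorname{Hom}\) commutes with the completed base change. The map induced by \(p^*\) on morphisms is the inclusion of \(t\)-constant forms. The linear Poincar\'e contraction of \(\Omega^\bullet(I)\), tensored with the identity, is a continuous chain homotopy equivalence on this completed tensor product. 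We filter by \(\Omega\)-degree, which is bounded in \(\{0,1\}\), so that the perturbation by \(D\) is harmless: \(D\) acts only on the second factor, so the contraction commutes with it up to sign. Hence the inclusion is a quasi-isomorphism.

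Next we show that \(p^*\) is essentially surjective on \(H^0\). Let \((E,\mathbb E)\) be cohesive over \(\Omega^\bullet(I)\widehat\otimes\CA^{0,\bullet}(Z)\). The multiplicative homotopy \(H_s(t,z)=((1-s)t+st_0,z)\) from \(1\) to \(p^*j^*\) is a smooth homotopy of dga maps in the sense of \cite[Section~4]{CHL2021}. The step I expect to be the main obstacle is passing from Maurer--Cartan elements on a fixed graded module \(G\otimes\CA^\bullet\) to cohesive modules whose underlying bundle \(E^\bullet\) over \(I\times Z\) need not be trivial. I would handle it in two steps.
\begin{enumerate}
\item Since \(I\) is contractible, homotopy invariance of smooth vector bundles gives \(E^\bullet\cong p^*j^*E^\bullet\) as graded bundles. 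Thus \(E\) is the base change of a finitely generated projective graded module \(E_0=j^*E\) over \(\cinf(Z)\).
\item Using the isomorphism from step 1, I would regard \(\mathbb E\) as an MC element in the dga \(\Omega^\bullet(I)\widehat\otimes\CA^{0,\bullet}(Z;\End E_0)\), relative to a reference connection pulled back from \(Z\). That the curved coefficient algebra causes no trouble is the point I am least sure of. Failing a direct argument, I would reduce to a trivial \(G\) by writing \(E_0\) as a summand of a trivial bundle (Lemma~\ref{lem:finite-smooth-generators}) and adding a contractible complement, following \cite[Corollary~3.4]{CHL2021}.
\end{enumerate}
Then \cite[Proposition~4.6]{CHL2021} gives a homotopy gauge equivalence, that is, an isomorphism in \(H^0\) between \((E,\mathbb E)\) and \(p^*j^*(E,\mathbb E)\). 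This is exactly how that proposition was used in Theorem~\ref{thm:local-mc}, now applied globally in \(Z\) since \(Z\) plays no role in the homotopy. Combining the two steps, \(p^*\) is quasi-fully faithful and essentially surjective on \(H^0\), hence a dg quasi-equivalence, with quasi-inverse \(j^*\).
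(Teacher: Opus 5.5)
Your quasi-full-faithfulness step is the paper's argument. The gap is in essential surjectivity when the underlying graded bundle of \((E,\mathbb E)\) is not trivial. You leave your direct route unresolved, and your fallback fails as stated, because a contractible complement cannot in general make the underlying bundle trivial.

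Here is why. Let \(C\) be a contractible cohesive module with contracting homotopy \(h\). The form-degree-zero component of \([\mathbb C,h]=1\) reads \(\mathbb C_0h_0+h_0\mathbb C_0=1\). So \((C^\bullet,\mathbb C_0)\) is a contractible bounded complex of finitely generated projective \(\cinf\)-modules, and \(\sum_q(-1)^q[C^q]=0\) in \(K_0\). Hence \(E\oplus C\) can have trivial terms only if \(\sum_q(-1)^q[E^q]\) is an integer multiple of the trivial class.

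This already fails for \(E=p^*\CO(1)\) on \(I\times\mathbb P^1\) with superconnection \(d_I+\delb\). Indeed \(c_1\) is additive on \(K^0\), vanishes on trivial bundles, and \(c_1(\CO(1))\neq0\). To reduce to a free \(G\) you would instead need homotopy retracts, i.e.\ the idempotent completion behind \cite[Corollary~3.4]{CHL2021}. You would then also have to show that the equivalence from \cite[Proposition~4.6]{CHL2021} is compatible with the idempotent, and the proposal does not address this.

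The missing observation is that no curved algebra arises if you take the reference superconnection to be \(p^*j^*\mathbb E\). It is flat because \(j^*\) is a dga map. Then \(x=\mathbb E-p^*j^*\mathbb E\) is a Maurer--Cartan element of the honest dga \(\Omega^\bullet(I)\widehat\otimes\End_{\mathcal C_Z}(j^*E)\), and \(j^*x=0\).

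The paper uses exactly this, without the auxiliary contraction \(H_s\) and without Proposition~4.6:
\begin{itemize}
\item On \(p^*j^*E\) it writes \(\mathbb E=d_I+A(t)+dt\,B(t)\).
\item Flatness says each \(A(t)\) is a flat superconnection on \(j^*E\), and \(\partial_tA\) is the infinitesimal gauge transformation generated by \(B\).
\item The path-ordered exponential solving \(\partial_tg=-Bg\), \(g(t_0)=1\), converges by the argument of \cite[Theorem~4.4]{CHL2021}.
\item This \(g\) is a closed invertible morphism \(p^*j^*(E,\mathbb E)\to(E,\mathbb E)\).
\end{itemize}
Thus the interval itself serves as the homotopy parameter, and the result is a strict isomorphism rather than only a homotopy gauge equivalence. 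Once you use the flat reference, your \(H_s\)-plus-Proposition~4.6 route would also work, provided that proposition is applied to this endomorphism dga rather than to \(\CA\otimes\End(G)\).
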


\begin{proof}
Choose \(t_0\in I\) and let \(j(z)=(t_0,z)\).  For two objects
\(E,F\in\mathcal P_{\CA^{0,\bullet}(Z)}\), the morphism complex between
their pullbacks is
\[
\Omega^\bullet(I)\widehat\otimes
\operatorname{Hom}_{\mathcal P_{\CA^{0,\bullet}(Z)}}(E,F).
\]
Evaluation at \(t_0\), together with the standard integration operator on
the interval, contracts this complex onto
\(\operatorname{Hom}_{\mathcal P_{\CA^{0,\bullet}(Z)}}(E,F)\).
Hence \(p^*\) is
quasi-fully faithful.

For essential surjectivity, let \((E,\mathbb E)\) be cohesive on
\(I\times Z\).  Its underlying bounded graded vector bundle is isomorphic
to the pullback of \(j^*E\), because \(I\) is contractible.  In such a
trivialization write
\[
\mathbb E=d_I+A(t)+dt\,B(t),
\]
where \(A(t)\) has total degree one and \(B(t)\) has total degree zero in
the completed endomorphism dga over \(Z\).  The equation
\(\mathbb E^2=0\) says that every \(A(t)\) is Maurer--Cartan and that its
\(t\)-derivative is the gauge derivative generated by \(B(t)\).  The
parallel-transport equation
\[
\partial_tg(t)=-B(t)g(t),
\qquad
g(t_0)=1,
\]
has an invertible solution given by the path-ordered exponential; its
convergence in the dg Arens--Michael algebra is the argument of
\cite[Theorem~4.4]{CHL2021}.  Flatness then makes \(g\) a closed invertible
morphism from \(p^*j^*(E,\mathbb E)\) to \((E,\mathbb E)\).  Thus \(p^*\)
is quasi-essentially surjective and hence a dg quasi-equivalence.
\end{proof}

We use the same symbol for a pretriangulated dg category and its dg nerve.
The cohesive categories below are stable and idempotent-complete:
pretriangulatedness is built into the cohesive construction, and
idempotents split by \cite[Lemma~3.3]{CHL2021}, since the dgas are
nonnegatively graded and flat over degree zero.  We use
\(\operatorname{Cat}^{\mathrm{perf}}_{\infty,\C}\) for the infinity-category
of small \(\C\)-linear stable idempotent-complete infinity-categories and
exact functors; compare \cite[Section~3]{BlumbergGepnerTabuada2013}.  The
forgetful functor to small \(\C\)-linear infinity-categories creates the
pullbacks of exact functors used below.  Indeed, finite limits and colimits
in such a pullback are computed componentwise, and compatible idempotents
split because the space of splittings of a split idempotent is contractible.
Thus the pullback is again stable and idempotent-complete and computes the
corresponding Morita pullback.

Write
\[
\mathcal C_Z=\mathcal P_{\CA^{0,\bullet}(Z)}
\]
and let \(\phi^*:\mathcal C_Z\to\mathcal C_Z\) be pullback.  Define the
homotopy fixed-point dg category by
\[
\mathcal C_Z^{h\phi}
=
\operatorname*{holim}\left(
\mathcal C_Z
\xrightarrow{(1,1)}
\mathcal C_Z\times\mathcal C_Z
\xleftarrow{(1,\phi^*)}
\mathcal C_Z
\right)
\]
in \(\operatorname{Cat}^{\mathrm{perf}}_{\infty,\C}\).  It is the derived
enhancement of the usual equivariant category: an object is represented by
\(E\in\mathcal C_Z\) and a homotopy-coherent equivalence
\(\phi^*E\simeq E\), while its coherent heart consists of sheaves equipped
with an isomorphism \(\phi^*\mathcal F\cong\mathcal F\).

\begin{thm}[Derived monodromy for a holomorphic suspension]
\label{thm:suspension-derived-monodromy}
Restriction to the transversal \(Z\), together with continuation once
around the base circle, induces an equivalence
\[
\boxed{
\mathcal P_{\CA^\bullet(X_\phi)}
\simeq
\mathcal C_Z^{h\phi}
}
\]
in \(\operatorname{Cat}^{\mathrm{perf}}_{\infty,\C}\), equivalently a
Morita equivalence of dg categories.
\end{thm}

\begin{proof}
Cover \(S^1\) by two open arcs \(U_0,U_1\) such that
\[
U_0\cap U_1=J_0\amalg J_1
\]
is the disjoint union of two open intervals.  Choose lifts of the arcs to
\(\R\).  They trivialize the inverse images as \(U_i\times Z\); on one
component of the overlap the transition is the identity of \(Z\), and on
the other it is \(\phi\).

The binary-cover form of
Theorem~\ref{thm:eis-cohesive-descent} gives
\[
\mathcal P_{\CA^\bullet(X_\phi)}
\simeq
\mathcal P_{\CA^\bullet(U_0\times Z)}
\mathop{\times}\limits^h_{
\mathcal P_{\CA^\bullet((J_0\amalg J_1)\times Z)}
}
\mathcal P_{\CA^\bullet(U_1\times Z)}.
\]
All three local categories and the global cohesive category are stable and
idempotent-complete, as noted above.  The displayed dg homotopy pullback
therefore also computes the pullback in
\(\operatorname{Cat}^{\mathrm{perf}}_{\infty,\C}\); this uses closure of
stable idempotent-complete categories under limits, not preservation of
pullbacks by Morita localization.

The category on the disconnected overlap is the product of the categories
on its two components.  Indeed, the central idempotents of the product dga
split every finite projective module and its superconnection; conversely,
two cohesive modules combine over the product dga.

Lemma~\ref{lem:contractible-real-factor} replaces the three vertices of this
cospan by
\[
\mathcal C_Z,\qquad
\mathcal C_Z\times\mathcal C_Z,\qquad
\mathcal C_Z.
\]
Fix one pair of evaluation functors on \(J_0\amalg J_1\).  On the first
leg, evaluation after restriction is smoothly homotopic to evaluation on
\(U_0\) in each component.  On the second leg the same statement holds,
with \(\phi^*\) retained on the component where the trivializations differ.
More explicitly, every evaluation functor is an inverse to the same
constant-extension quasi-equivalence of
Lemma~\ref{lem:contractible-real-factor}.  In an infinity-category the space
of inverses to an equivalence is contractible, so these evaluation functors
are naturally equivalent.  The two resulting squares share the chosen
middle evaluation functor.  Since the indexing cospan has no composable
nonidentity arrows, no further coherence is required.  The resulting
cospan is
\[
\mathcal C_Z
\xrightarrow{(1,1)}
\mathcal C_Z\times\mathcal C_Z
\xleftarrow{(1,\phi^*)}
\mathcal C_Z.
\]
Homotopy limits preserve equivalences of diagrams, and its homotopy
pullback is \(\mathcal C_Z^{h\phi}\) by definition.
\end{proof}

\begin{rem}
If \(Z=\{*\}\) and \(\phi=1\), the theorem gives
\[
\mathcal P_{\Omega^\bullet(S^1)}
\simeq
\operatorname{Perf}(\C)^{h\mathbb Z}.
\]
For the trivial object, the right-hand mapping complex is
\(R\Gamma(\mathbb Z,\C)\), with cohomology in degrees zero and one.  Thus
the homotopy fixed points retain the degree-one class that a strict
equalizer would discard.
\end{rem}

\begin{cor}[Equivariant Ext spectral sequence]
\label{cor:suspension-equivariant-ext}
Let
\[
(\mathcal F,\theta_{\mathcal F}),
\quad
(\mathcal G,\theta_{\mathcal G})
\in\operatorname{Coh}(Z)^\phi,
\qquad
\theta_{\mathcal F}:\phi^*\mathcal F\xrightarrow{\sim}\mathcal F,
\quad
\theta_{\mathcal G}:\phi^*\mathcal G\xrightarrow{\sim}\mathcal G,
\]
and let \(\widetilde{\mathcal F},\widetilde{\mathcal G}\) be the
corresponding coherent \(\CO_V\)-modules, obtained by descending
\(\operatorname{pr}_Z^{-1}\mathcal F\) and
\(\operatorname{pr}_Z^{-1}\mathcal G\) using the displayed isomorphisms.
The induced action on
\(\ext_Z^q(\mathcal F,\mathcal G)\) is
\[
T(\alpha)
=
\theta_{\mathcal G}\circ\phi^*(\alpha)\circ
\theta_{\mathcal F}^{-1}.
\]
There is a spectral sequence
\[
E_2^{p,q}
=
H^p\left(
\mathbb Z,\ext_Z^q(\mathcal F,\mathcal G)
\right)
\Longrightarrow
\ext_{\CO_V}^{p+q}
\left(
\widetilde{\mathcal F},\widetilde{\mathcal G}
\right).
\]
Since \(\mathbb Z\) has cohomological dimension one, there are natural
short exact sequences
\[
\begin{split}
0\longrightarrow&
\operatorname{coker}\left(
T-1:\ext_Z^{k-1}(\mathcal F,\mathcal G)
\longrightarrow
\ext_Z^{k-1}(\mathcal F,\mathcal G)
\right)\\
\longrightarrow&
\ext_{\CO_V}^{k}
\left(
\widetilde{\mathcal F},\widetilde{\mathcal G}
\right)\\
\longrightarrow&
\ker\left(
T-1:\ext_Z^{k}(\mathcal F,\mathcal G)
\longrightarrow
\ext_Z^{k}(\mathcal F,\mathcal G)
\right)
\longrightarrow0.
\end{split}
\]
\end{cor}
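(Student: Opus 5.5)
We compute the derived morphism complex in the homotopy fixed-point category of Theorem~\ref{thm:suspension-derived-monodromy}, and then read off the spectral sequence from a two-term filtration. Let \(E,F\in\mathcal C_Z\) be cohesive representatives of \(\mathcal F,\mathcal G\), which exist by Block's theorem on the compact complex manifold \(Z\). The isomorphisms \(\theta_{\mathcal F},\theta_{\mathcal G}\) lift to closed invertible degree-zero morphisms \(\phi^*E\to E\) and \(\phi^*F\to F\), well defined up to homotopy, and these give objects \(\widehat E,\widehat F\) of \(\mathcal C_Z^{h\phi}\).

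The first step is to check that \(\widehat E,\widehat F\) correspond, under Theorem~\ref{thm:suspension-derived-monodromy} and Theorem~\ref{thm:main}, to \(\widetilde{\mathcal F},\widetilde{\mathcal G}\). For this I would use the two-arc cover from the proof of that theorem. On each \(U_i\times Z\) the sheafification of \(p^*E\) is \(\operatorname{pr}_Z^{-1}\mathcal F\), by Lemma~\ref{lem:contractible-real-factor} together with the local identification \(\CO_V=\pi^{-1}\CO_Z\). The gluing on \(J_1\) uses \(\theta_{\mathcal F}\), and this is exactly the descent datum defining \(\widetilde{\mathcal F}\). The claim then follows by comparing with the heart equivalence \(\operatorname{Coh}(X_\phi,\CO_V)\simeq\operatorname{Coh}(Z)^\phi\) of Theorem~\ref{thm:coherent-monodromy}. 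Consequently
\[
\ext^k_{\CO_V}(\widetilde{\mathcal F},\widetilde{\mathcal G})\cong H^k\operatorname{Map}_{\mathcal C_Z^{h\phi}}(\widehat E,\widehat F).
\]

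The second step computes mapping complexes in the homotopy pullback \(\mathcal C_Z\times^h_{\mathcal C_Z\times\mathcal C_Z}\mathcal C_Z\). Mapping spectra in a pullback of stable categories are pullbacks of mapping spectra. Writing \(M=\operatorname{Hom}_{\mathcal C_Z}(E,F)\), this gives
\[
\operatorname{Map}(\widehat E,\widehat F)\simeq\operatorname{fib}\bigl(M\xrightarrow{\,T-1\,}M\bigr),
\]
where \(T(f)=\theta_F\circ\phi^*f\circ\theta_E^{-1}\). To see this, the pullback is \(M\times^h_{M\times M}M\) with maps \((1,1)\) and \((1,T)\). This is the equalizer of \(1\) and \(T\), which is the fiber of \(T-1\) and hence the complex \(R\Gamma(\mathbb Z,M)\) for the \(\mathbb Z\)-action generated by \(T\). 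Since \(H^q(M)=\ext^q_Z(\mathcal F,\mathcal G)\) by Block's theorem, the induced action on cohomology is the displayed formula for \(T(\alpha)\).

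The final step is the long exact sequence of the fiber:
\[
\cdots\to H^{k-1}M\xrightarrow{T-1}H^{k-1}M\to H^k\operatorname{fib}\to H^kM\xrightarrow{T-1}H^kM\to\cdots.
\]
Splitting it at each \(k\) gives the stated short exact sequences. Equivalently, filtering the cone of \(T-1\) by its two columns yields a two-column spectral sequence with \(E_2^{p,q}=H^p(\mathbb Z,H^qM)\), where \(p\in\{0,1\}\), and it degenerates at \(E_2\). Naturality follows from functoriality of the fiber construction.

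The main obstacle is the first step. One must verify that the equivalence of Theorem~\ref{thm:suspension-derived-monodromy}, which is built from evaluation functors and chosen homotopies, sends the glued object to the descended sheaf, and that it matches the monodromy identification with \(\phi\) rather than \(\phi^{-1}\) and the stated direction of \(T\). I would settle the conventions by tracking the deck generator \(\gamma\) on the component \(J_1\), and reduce the compatibility check to the heart, where everything is strict.
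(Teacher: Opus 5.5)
Your proposal is correct and follows essentially the same route as the paper. You identify the glued equivariant Dolbeault objects with the descended modules through the restriction-and-gluing construction of Theorem~\ref{thm:suspension-derived-monodromy}, compute the mapping complex in the homotopy pullback as $\operatorname{fib}(T-1)\simeq R\Gamma(\mathbb Z,M)$, and read off the degenerate two-column spectral sequence and the short exact sequences from its long exact sequence. The differences are cosmetic: you identify cohomology through the $H^0$-level equivalences of Block and Theorem~\ref{thm:main}, where the paper uses the dg enhancement of Remark~\ref{rem:dg-enhancement} to get a complex-level identification, and you get the fiber directly from the pullback rather than from the free resolution of $\C$ over $\C[\mathbb Z]$.
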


\begin{proof}
Put \(M=\rhom_Z(\mathcal F,\mathcal G)\).  The equivariant structures
induce the chain automorphism \(T\) displayed above.  By
Theorem~\ref{thm:coherent-monodromy}, the strict equivariant sheaves descend
to \(\widetilde{\mathcal F}\) and \(\widetilde{\mathcal G}\).  The
equivalence in Theorem~\ref{thm:suspension-derived-monodromy} was constructed
from these same restriction and gluing functors, so it takes the two
equivariant Dolbeault objects to those descended modules.  The dg
enhancement of Remark~\ref{rem:dg-enhancement}, applied both to \(X_\phi\)
and to the Dolbeault EIS on \(Z\), identifies their mapping complexes with
the corresponding derived sheaf Hom complexes.

Mapping complexes in a homotopy limit of stable dg categories are the
corresponding homotopy limits of mapping complexes.  Therefore
\[
\rhom_{\CO_V}
\left(
\widetilde{\mathcal F},\widetilde{\mathcal G}
\right)
\simeq
R\Gamma(\mathbb Z,M)
\simeq
\operatorname{Cone}(T-1:M\longrightarrow M)[-1].
\]
The last model follows by applying
\(\rhom_{\C[\mathbb Z]}(-,M)\) to the length-one free resolution
\[
0\longrightarrow
\C[\mathbb Z]
\xrightarrow{g-1}
\C[\mathbb Z]
\longrightarrow
\C
\longrightarrow0.
\]
The cohomology spectral sequence of derived invariants is the displayed
group-cohomology spectral sequence, and the long exact sequence of the cone
gives the short exact sequences.
\end{proof}

For the canonical equivariant sheaf \(\CO_Z\),
Corollary~\ref{cor:suspension-equivariant-ext} becomes
\[
H^p\left(\mathbb Z,H^q(Z,\CO_Z)\right)
\Longrightarrow
\ext_{\CO_V}^{p+q}(\CO_V,\CO_V).
\]
This is also the Leray spectral sequence for the mapping-torus projection
\(f:X_\phi\to S^1\).  In particular, it gives
\[
\begin{split}
0\longrightarrow&
\operatorname{coker}\left(
\phi^*-1:H^{k-1}(Z,\CO_Z)\longrightarrow H^{k-1}(Z,\CO_Z)
\right)\\
\longrightarrow&
H^k(X_\phi,\CO_V)\\
\longrightarrow&
\ker\left(
\phi^*-1:H^k(Z,\CO_Z)\longrightarrow H^k(Z,\CO_Z)
\right)
\longrightarrow0,
\end{split}
\]
where
\[
\ext^k_{\CO_V}(\CO_V,\CO_V)
\cong H^k(X_\phi,\CO_V).
\]

\begin{example}[An elliptic-curve suspension]
\label{ex:elliptic-curve-suspension}
Take \(Z=E\) to be an elliptic curve and \(\phi=[-1]\).  The induced action
is \(+1\) on \(H^0(E,\CO_E)\cong\C\) and \(-1\) on
\(H^1(E,\CO_E)\cong\C\).  Hence
\[
\ext^k_{\CO_V}(\CO_V,\CO_V)
\cong
\begin{cases}
\C,&k=0,1,\\
0,&\text{otherwise}.
\end{cases}
\]
By contrast, the product EIS \(S^1\times E\) has self-Ext dimensions
\(1,2,1\) in degrees \(0,1,2\).  The suspension monodromy therefore removes
the transverse \(H^1(E,\CO_E)\) invariant and coinvariant contributions.
Moreover, for \(\phi=[-1]\), ordinary holonomy replaces
\(\mathbb Z\ltimes E\) by \((\mathbb Z/2)\ltimes E\); the surviving
degree-one class is another concrete reason that holonomy effectivization
does not model the full derived category, whereas
\(\mathcal C_E^{h\phi}\) does.
\end{example}

\printbibliography

\end{document}